\newtheorem{theorem}{Theorem}[section]
\newtheorem{lemma}[theorem]{Lemma}
\newtheorem{proposition}[theorem]{Proposition}
\theoremstyle{definition}
\newtheorem{definition}[theorem]{Definition}
\theoremstyle{remark}
\numberwithin{equation}{section}
\newcommand{\real}{{\mathbb R}}
\newcommand{\A}{{\mathcal A}}
\newcommand{\E}{{\mathcal E}}
\newcommand{\F}{{\mathcal F}}
\newcommand{\M}{{\mathcal M}}
\newcommand{\N}{{\mathcal N}}
\newcommand{\R}{{\mathcal R}}
\newcommand{\8}{\infty}
\newcommand{\el}{\ell}
\newcommand{\be}{\begin{eqnarray*}}
\newcommand{\ee}{\end{eqnarray*}}
\newcommand{\beq}{\begin{equation}}
\newcommand{\eeq}{\end{equation}}
\newcommand{\beqn}{\begin{equation*}}
\newcommand{\eeqn}{\end{equation*}}
\newcommand{\bsp}{\begin{split}}
\newcommand{\esp}{\end{split}}
\begin{document}

\setcounter{page}{1}

\title[On Haagerup noncommutative $H^p(\A)$ spaces]{On Haagerup noncommutative quasi $H^p(\A)$ spaces}

\author[T. N. Bekjan ]{Turdebek N. Bekjan}

\address{Astana IT University,  Astana 010000, Kazakhstan}
\email{ bekjant@yahoo.com}

\subjclass[2020]{Primary 46L52; Secondary 47L05.}

\keywords{subdiagonal  algebras, Haagerup noncommutative $H^{p}$-space, interpolation.}

\begin{abstract}
Let $\mathcal{M}$ be a $\sigma$-finite von Neumann algebra, equipped with a
 normal  faithful state $\varphi$,  and  let $\mathcal{A}$ be a maximal subdiagonal subalgebra of $\mathcal{M}$.
 We have  proved that  for $0< p<1$, $H^p(\mathcal{A})$ is independent of $\varphi$. Furthermore, in the case that $\mathcal{A}$ is a type 1 subdiagonal subalgebra, we have extended the most recent results about the Riesz type factorization to the case $0<p<1$ and have proved an interpolation theorem for $H^p(\mathcal{A})$ in the case where $0 < p_0, p_1 \le \infty$.
\end{abstract} \maketitle

\section{Introduction}
Let $(\Omega, \mu)$ be a measure space. It is well-known that if $0<\theta<1$
and  $0< p_0,\;
p_1\le\8$, then
 \beq\label{eq:interpolation-classical}
 \big(L^{p_0}(\Omega),\; L^{p_1}(\Omega)\big)_\theta=L^p(\Omega)
\eeq
with equal norms, where $1/p=(1-\theta)/p_0 + \theta/p_1$ (see \cite{Ca,CaT}). If $\mathcal{M}$ is a semifinite von Neumann algebra equipped with a normal
semifinite faithful trace $\tau$ and  $L^p(\mathcal{M},\tau)$  is the tracial noncommutative
$L^p$-space associated with $(\mathcal{M},\tau)$, then the noncommutative version of \eqref{eq:interpolation-classical} holds, i.e.,
for any $0< p_0,\;
p_1\le\8$ and $0<\theta<1$,
\beq\label{eq:interpolation-semifinite}
 \big(L^{p_0}(\mathcal{M},\tau),\; L^{p_1}(\mathcal{M},\tau)\big)_\theta=L^p(\mathcal{M},\tau)
\eeq
with equivalents norms, where $1/p=(1-\theta)/p_0 + \theta/p_1$ (see \cite{PX,X}.

Let  $\mathcal{M}$  be a $\sigma$-finite von Neumann algebra equipped with a faithful
 normal   state $\varphi$ and let $L^p(\mathcal{M}, \varphi)$
be the Haagerup  noncommutative $L^p$-spaces associated with $(\mathcal{M}, \varphi)$.  Kosaki \cite{Kos} proved  the Haagerup noncommutative
$L^{p}$-space analogue of the \eqref{eq:interpolation-semifinite} for the case that $1\le p_0,\;
p_1\le\8$. In \cite{GYZ}, this result was extended for the case $0< p_0,\;
p_1\le\8$.

Ji \cite{J1} studied  Haagerup noncommutative
$H^{p}$-spaces $H^p(\mathcal{A})$ based on Haagerup noncommutative
$L^p$-space $L^p(\mathcal{M})$ associated with a $\sigma$-finite von Neumann algebra $\mathcal{M}$ and a maximal subdiagonal
    algebra $\mathcal{A}$ of $\mathcal{M}$.  He proved that for $1\le p<\8$,  $H^p(\mathcal{A})$ is independent of $\varphi$ (see  \cite[Theorem 2.5]{J1}). We will extend this result for the case $0< p<1$.
In \cite{BR1}, the Haagerup noncommutative
$H^{p}$-space version  of  \eqref{eq:interpolation-semifinite} for the case $1\le p_0,\;
p_1\le\8$  has been proved. In this paper,  we  generalize this result to  the case  where $0< p_0,\;
p_1\le\8$ and $\mathcal{A}$ is a type 1 subdiagonal algebra (see Section 4).

The organization of the paper is as follows: In Section 2, we
give some definitions and prove that $H^p(\mathcal{A})$ is independent from  $\varphi$  for $0< p<1$.   Section 3 is devoted to discussing complex interpolation for a couple of
(quasi)   Haagerup
noncommutative $H^{p}$-spaces. Section 4 presents a complex interpolation theorem for the case where $0< p_0,\;
p_1\le\8$ and  $\mathcal{A}$ is a type 1 subdiagonal algebra.

\section{Preliminaries}

In the context of this paper, let $\mathcal{M}$ be a $\sigma$-finite von Neumann algebra on a complex Hilbert space $\mathcal{H}$, equipped with a faithful and normal state denoted as $\varphi$. We shall briefly recall the definition of Haagerup noncommutative $L^p$-spaces associated with $\mathcal{M}$.

Let $\{\sigma_{t}^\varphi\}_{t\in\mathbb{R}}$  be the one-parameter modular automorphism group of $\mathcal{M}$ associated with $\varphi$ and denote the crossed product of $\mathcal{M}$ by this group as $\mathcal{N} =\mathcal{M}\rtimes_{\sigma^\varphi}\mathbb{R}$.  We consider the canonical embedding $\pi(M)$  of $\mathcal{M}$ into $\mathcal{N}$ and identify $\mathcal{M}$ with $\pi(\mathcal{M})$ within $\mathcal{N}$. There is  a  dual action $\{\hat{\sigma}_{t}\}_{t\in\mathbb{R}}$ of  $\mathbb{R}$ on $\mathcal{N}$. Then, we can express $\mathcal{M}$ as the set of elements in $\mathcal{N}$ that remain invariant under this dual action:
$$
\mathcal{M}=\{x\in\mathcal{N}: \; \hat{\sigma}_{t}(x)=x,\;\forall
t\in\mathbb{R}\}.
$$
Recall that $\N$ is  semi-finite (cf. \cite{PT}), and therefore, there exists a normal, semi-finite, and faithful trace denoted as $\tau$ defined on $\mathcal{N}$ that satisfies the equation
$$
\tau\circ\hat{\sigma}_{t}=e^{-t}\tau,\quad \forall t\in\mathbb{R}.
$$
Now, by denoting $L_0(\mathcal{N})$ as the set of $\tau$-measurable operators affiliated with $\mathcal{N}$, we can define the Haagerup noncommutative $L^p$-spaces, where $0<p\leq\infty$, as follows
$$
L^{p}(\mathcal{M},\varphi)=\{x\in L_{0}(\N,\tau):
\;\hat{\sigma}_{t}(x)=e^{-\frac{t}{p}}x,\;\forall t\in\mathbb{R}\}.
$$
For the space $L^1(\M)$, it possesses a canonical trace functional denoted as $tr$, which is used to define a norm $\|x\|_p = tr(|x|^p)^\frac{1}{p}$ on $L^p(\M)$. The topology on $L^p(\M)$ induced by this norm coincides with the relative topology of convergence in measure inherited by $L^p(\M)$ from $L_0(\N)$. For $0 < p <\infty$ and any element $x$ in $L^p(\mathcal{M},\varphi)$, we have the relationship
\beq\label{eq:generalized singular-haagerup}
\mu_t(x)=t^{-\frac{1}{p}}\|x\|_p,\qquad t>0,
\eeq
where $\mu_t(\cdot)$  is relative to $(\N,\tau)$ (see \cite[Lemma 4.8]{FK}).
Additionally, it is worth noting that
$$
L^{\infty}(\M,\varphi)=\M.
$$
Finally, according to \cite[Theorem 3.1]{L3}, for any element $x\in \mathcal{M}$, we have that
$$
\|x\|_\8 = \mu_t(x)\qquad \mbox{for all}\; 0< t \le 1.
$$
Recall that the dual weight $\hat{\varphi}$ of $\varphi$ has a Radon-Nikodym derivative $D$ with respect to $\tau$ (cf. \cite{PT}). This $D$ is an invertible positive self-adjoint operator affiliated with $\mathcal{N}$ and satisfies the following relationships
$$
\hat{\varphi}(x)=\tau(Dx),\quad x\in \N_{+},
$$

$$
\varphi(x)=tr(Dx),\quad \forall x\in \mathcal{M}
$$
and the modular automorphism group $\{\sigma_{t}^\varphi\}_{t\in\mathbb{R}}$ is given
by
$$
\sigma_{t}^\varphi(x)=D^{it}xD^{-it}, \quad x\in\mathcal{M},\; t\in\mathbb{R}.
$$

It is a well-known fact that $L^{p}(\mathcal{M},\varphi)$ is, up to isometry, independent of the choice of $\varphi$. Therefore, we will use the notation $L^{p}(\mathcal{M})$ to denote the abstract Haagerup noncommutative $L^{p}$-space, i.e., $L^{p}(\mathcal{M},\varphi)$.

For $0<p\leq\infty$ and a subset $K \subset L^{p}(\mathcal{M})$, we denote by $[K]_{p}$ the closed linear span of $K$ within $L^{p}(\mathcal{M})$ (relative to the w*-topology for $p =\infty$). For $0<p<\infty$ and $0\leq \eta \leq 1$, it can be obtained that
$$
L^{p}(\mathcal{M})=[D^{\frac{1-\eta}{p}}\M D^{\frac{\eta}{p}}]_p
$$
( refer to \cite[Lemma 1.1]{JX} for details).

Next, consider the interval $(0,\infty)$ equipped with the standard Lebesgue measure $m$. We denote $L_0(0,\infty)$ as the space of real-valued functions $f$ on $(0,\infty)$ that are measurable with respect to $m$ and satisfy $m(\{\omega \in (0,\8) :\; |x(\omega)| > s\})<\8$ for some $s$. The decreasing rearrangement function $f^*: [0, \infty) \rightarrow [0, \infty]$ for $f \in L_0 (0,\infty)$ is defined as follows
$$
f^*(t) = \inf\{s > 0 : \; m ( \{\omega \in (0,\8) :\; |f (\omega)| > s\}) \le t\}
$$
for $t \geq 0$.

The classical weak $L_p$-space, denoted as $L_{p,\infty}(0,\infty)$, is defined for $0<p<\infty$ as the set of all measurable functions $f$ on $(0,\infty)$ such that
\be
\|f\|_{Lp,\8}=\sup_{t>0}t^\frac{1}{p}f^*(t)<\8.
\ee

Let  $S^+(0,\8)=\{\chi_E\in(0,\8)\;:\; m(E)<\8\}$ and $S(0,\8)$ be the linear span of $S^+(0,\infty)$. We then define $L^{p,\8}_0(0,\8)$ as the closure of  $S(0,\8)$  within $L^{p,\8}(0,\8)$.

For $0< p < \8$, the noncommutative weak $L_p$ space $L^{p,\8}(\N)$ consists of operators $x$ in $L_{0}(\N)$ such that
\be
\|x\|_{p,\8}= \sup_{t > 0} t^{\frac{1}{p}}\mu_{t}(x) < \8.
\ee
When equipped with the norm $\|.\|_{p,\8}$, $L^{p,\8}(\N)$ forms a quasi Banach space. However, for $p>1$, it is possible to renorm $L^{p,\infty}(\N)$ as a Banach space by
\be
x \mapsto \sup_{t >0} t^{-1+ \frac{1}{p}} \int_{0}^{t}\mu_{s}(x)d s.
\ee
Alternatively, the quasi-norm can be described as follows
\beq\label{eq:WeakLpNormMu}
\|x\|_{p,\8} = \inf \big \{c>0:\; t ( \mu_t (x)/c )^p \le 1,\; \forall t >0 \big \}.
\eeq
Moreover, another useful representation in terms of the distribution function is given by
\beq\label{eq:WeakLpNormLmbda}
\|x\|_{p,\8} = \sup_{s > 0} s \lambda_s (x)^{\frac{1}{p}}.
\eeq

Recall that noncommutative weak $L_p$-spaces can be represented through noncommutative Lorentz spaces. Refer to  \cite{DDP2} and \cite{X1} for details.

Let us define $S^+(\N)$ as the set of elements $x$ in $\N^+$ such that $\tau(s(x))<\infty$, and let $S(\N)$ be the linear span of $S^+(\N)$. The closure of $S(\N)$ in $L^{p,\infty}(\N)$ is denoted as $L^{p,\infty}_0(\N)$. It is worth noting that
$$
(L^{p,\infty}_0(0,\infty))(\N)=L^{p,\infty}_0(\N)
$$
 (for definition of noncommutative symmetric space  see \cite{S}).

For any $0 < p < \infty$ and any element $x$ in $L^p(\M)$, using  \eqref{eq:generalized singular-haagerup}, we obtain that
\be
\|x\|_p=\sup_{t > 0} t^{\frac{1}{p}}\mu_{t}(x)=\|x\|_{p,\infty}.
\ee
Consequently, we get the following result.

\begin{theorem}\label{thm:Lp-isometric weak Lp}
For $0 < p \leq \infty$, $L^{p}(\M)$ is isometrically isomorphic to a subspace of the noncommutative weak $L^p$ space $L^{p,\infty}_0(\N)$.
\end{theorem}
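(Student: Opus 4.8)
The plan is to show that the canonical inclusion $\iota\colon L^p(\M)\to L^{p,\infty}(\N)$, which sends $x\in L^p(\M)\subset L_0(\N)$ to itself, is an isometry onto its range, and then to locate that range inside the closed subspace $L^{p,\infty}_0(\N)$. The isometry is immediate from the identity established just before the statement. Indeed, for $0<p<\infty$ and $x\in L^p(\M)$, the Haagerup scaling \eqref{eq:generalized singular-haagerup} gives $\mu_t(x)=t^{-1/p}\|x\|_p$ for all $t>0$, so $t^{1/p}\mu_t(x)=\|x\|_p$ is constant in $t$ and hence $\|x\|_{p,\infty}=\sup_{t>0}t^{1/p}\mu_t(x)=\|x\|_p$; thus $\iota$ is linear with $\|\iota(x)\|_{p,\infty}=\|x\|_p$. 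For $p=\infty$ I would argue separately: here $L^\infty(\M)=\M$ and $\|x\|_{\infty,\infty}=\sup_{t>0}\mu_t(x)$, while $\mu_t$ is nonincreasing and, by \cite[Theorem 3.1]{L3}, $\mu_t(x)=\|x\|_\infty$ for $0<t\le 1$; taking the supremum again gives $\|x\|_{\infty,\infty}=\|x\|_\infty$, so $\iota$ is isometric in this case too.

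Because $\iota$ is norm-preserving and $L^p(\M)$ is complete in its (quasi-)norm, the image $\iota(L^p(\M))$ is complete, hence closed, in the subspace topology; therefore $L^p(\M)$ is isometrically isomorphic to a closed linear subspace of $L^{p,\infty}(\N)$. At this level of generality nothing more is needed, and the isometry together with completeness is the routine part of the argument.

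The remaining point, which I expect to be the main obstacle, is to refine the target from $L^{p,\infty}(\N)$ to its subspace $L^{p,\infty}_0(\N)$, the closure of the finitely supported operators $S(\N)$. Using the identification $L^{p,\infty}_0(\N)=(L^{p,\infty}_0(0,\infty))(\N)$ recorded above, membership is governed entirely by the decreasing rearrangement, so the question reduces to whether the scalar function $t\mapsto \|x\|_p\,t^{-1/p}$ lies in $L^{p,\infty}_0(0,\infty)$, the closure of the finite-support step functions $S(0,\infty)$ in $L^{p,\infty}(0,\infty)$. This is the delicate step: every nonzero $x\in L^p(\M)$ has support projection of infinite $\tau$-trace and a rearrangement $\|x\|_p\,t^{-1/p}$ that blows up as $t\to 0^+$ and stays strictly positive as $t\to\infty$, so neither a one-sided truncation nor a crude two-sided spectral cut-off $\|x\|_p\,t^{-1/p}\chi_{[s_1,s_2]}$ will obviously control the weak-$L^p$ quasinorm of the remainder. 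I would therefore concentrate all the effort on organizing this approximation, treating the isometric embedding as essentially done and this density/identification step as the heart of the proof.
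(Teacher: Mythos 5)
Your isometry computation is precisely the paper's entire argument: the paper derives $\|x\|_p=\sup_{t>0}t^{1/p}\mu_t(x)=\|x\|_{p,\infty}$ from \eqref{eq:generalized singular-haagerup} and then states the theorem with the single word ``Consequently''. So the part you treat as routine is, in fact, all the paper proves; the paper never addresses the point you single out as the heart of the matter.

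That deferred step --- showing the canonical copy of $L^p(\M)$ lies in $L^{p,\infty}_0(\N)$ rather than merely in $L^{p,\infty}(\N)$ --- is a genuine gap in your write-up, and your suspicion that no truncation will control the remainder is correct for a structural reason: the step cannot be carried out at all. Every $y\in S(\N)$ is bounded with support of finite trace, so $t^{1/p}\mu_t(y)\to 0$ both as $t\to 0^+$ and as $t\to\infty$; by the quasi-triangle inequality $\mu_t(a+b)\le\mu_{t/2}(a)+\mu_{t/2}(b)$ this two-sided vanishing passes to the $\|\cdot\|_{p,\infty}$-closure, hence holds for every element of $L^{p,\infty}_0(\N)$. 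But for nonzero $x\in L^p(\M)$ one has $t^{1/p}\mu_t(x)\equiv\|x\|_p$, and letting $t\to\infty$ in $t^{1/p}\mu_t(x)\le 2^{1/p}\|x-y\|_{p,\infty}+2^{1/p}(t/2)^{1/p}\mu_{t/2}(y)$ gives $\|x-y\|_{p,\infty}\ge 2^{-1/p}\|x\|_p$ for every $y\in L^{p,\infty}_0(\N)$. Thus the canonical image of $L^p(\M)$ meets $L^{p,\infty}_0(\N)$ only at $0$, and the statement with the subscript $0$ is not obtainable by the inclusion map; what is actually proved (by you and by the paper) is the isometric embedding into $L^{p,\infty}(\N)$. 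For the paper's later use --- analytic convexity of $L^{p_1}(\M)^\eta+L^{p_0}(\M)$ via \cite[Proposition 7.5]{KMM} --- the embedding into the full space $L^{p_0,\infty}(\N)$ suffices, provided one checks that this full weak space admits an equivalent plurisubharmonic quasinorm; so the right move is to correct the target space rather than to keep searching for the approximation.
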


Let us consider $\mathcal{D}$ as a von Neumann subalgebra of $\M$, and let $\mathcal{E}$ be a normal, faithful conditional expectation from $\M$ to $\mathcal{D}$.

Now, we introduce the following definition.

\begin{definition} Suppose $\mathcal{A}$ is  a w*-closed subalgebra  of $\mathcal{M}$. We refer to $\mathcal{A}$ as a subdiagonal subalgebra of $\mathcal{M}$ with respect to
$\mathcal{E}$ (or to $\mathcal{D}$)  if the following conditions are hold.
\begin{enumerate}[\rm (i)]

\item $\mathcal{A}+ J(\mathcal{A})$ is w*-dense in  $\mathcal{M}$,

\item $\mathcal{E}(xy)=\mathcal{E}(x)\mathcal{E}(y),\; \forall\;x,y\in
\mathcal{A}$,

\item $\mathcal{A}\cap J(\mathcal{A})=\mathcal{D}$,
\end{enumerate}
We also refer to $\mathcal{D}$ as the diagonal of $\mathcal{A}$.
\end{definition}

It's noteworthy that Arveson \cite{A} does not assume subdiagonal subalgebras to be w*-weakly closed. However, since the weak* closure of a  subalgebra that is subdiagonal subalgebra with respect to $\mathcal{E}$ will also be subdiagonal subalgebra with respect to $\mathcal{E}$ (as pointed out in Remark 2.1.2 in \cite{A}), we may consistently assume that our subdiagonal subalgebras are always w*-weakly closed, as defined in \cite{J1, J2, X}. Since $\M$ is $\sigma$-finite, we
can take a faithful normal state $\phi$  on $\M$ such that $\phi\circ\E=\phi$.  It is well-known  that the existence of a (unique)
normal conditional expectation $\E :\M \rightarrow \mathcal{D}$ such that  $\varphi\circ\E=\varphi$  is equivalent to $\sigma_{t}^\varphi(\mathcal{D})=\mathcal{D}$  for all $t\in\mathbb{R}$ (cf. \cite{T3}).
Henceforth, in the  rest  of this paper, $\varphi$ consistently denotes a normal faithful state that satisfies $\varphi\circ\E=\varphi$.

The  subalgebra $\mathcal{A}$ is considered a maximal subdiagonal  subalgebra of $\mathcal{M}$ with
respect to $\mathcal{E}$ (or to $\mathcal{D}$), if $\mathcal{A}$ is not properly contained
in any other subalgebra of $\mathcal{M}$ that  is itself a subdiagonal subalgebra with respect to $\mathcal{E}$.
Additionally, we define
$$
\mathcal{A}_{0}=\{x\in \mathcal{A}:\;\mathcal{E}(x)=0\}.
$$
According to \cite[Theorem 2.2.1]{A}, $\mathcal{A}$ is  maximal if and only if it satisfies the condition
$$
 \mathcal{A}=\{x\in
\mathcal{M}:\;\mathcal{E}(yxz)=0,\;\forall y\in
\mathcal{A},\;\forall z\in \mathcal{A}_{0}\}.
$$

Using \cite[Theorem 2.4]{JOS} and \cite[Theorem 1.1]{X}, we deduce that a subdiagonal subalgebra $\A$ of $\mathcal{M}$ with respect to $\mathcal{D}$ is maximal if and only if it satisfies the condition
\begin{equation}\label{maximal}
\sigma_{t}^\varphi(\A)=\A,\qquad \forall t\in\mathbb{R}
\end{equation}
 (refer to \cite[Theorem 1.1]{L2}).

Throughout this paper, we will consistently denote $\mathcal{A}$ as a maximal subdiagonal subalgebra within $\mathcal{M}$ with respect to $\mathcal{E}$.

\begin{definition}\label{def:hp}   For $0<p<\infty$,  we define the Haagerup
noncommutative $H^{p}$-space that
\be
H^{p}(\mathcal{A})=[\mathcal{A}D^{\frac{1}{p}}]_{p},\quad H^{p}_{0}(\mathcal{A})=
 [\mathcal{A}_{0}D^{\frac{1}{p}}]_{p}.
\ee
\end{definition}

 Let $1\le p<\infty,\; 0\le \eta\le1$. By \cite[Proposition 2.1]{J2}, we know that
\beq\label{eq:equalityHp}
H^{p}(\mathcal{A})=[D^{\frac{1-\eta}{p}}\A D^{\frac{\eta}{p}}]_p,\quad H^{p}_0(\mathcal{A})=[D^{\frac{1-\eta}{p}}\A_0 D^{\frac{\eta}{p}}]_p.
\eeq

It is known that
\beq\label{eq:Lp(D)-property}
L^{p}(\mathcal{D})=[D^{\frac{1-\eta}{p}}\mathcal{D} D^{\frac{\eta}{p}}]_p,\quad  \forall p\in (0,\infty),\; \forall \eta\in [1,0].
\eeq
%Let $\mathcal{M}_{a}$ be the family of analytic vectors in $\mathcal{M}$.Recall that $x\in\mathcal{M}_{a}$ if only if the function $t\mapsto
%\sigma_{t}(x)$ extends to an analytic function from $\mathbb{C}$ to $\mathcal{M}.$  $\mathcal{M}_{a}$ is a w*-dense $\ast$-subalgebra of
%$\mathcal{M}$ (cf. \cite{PT}). From the proof of  \cite[Theorem 2.5]{J1} (see \cite[Lemma 2.3(ii)]{BR}), we know that
%if $\mathcal{A}_{a}$ and $\mathcal{D}_a$ are respectively the families of analytic vectors in $\mathcal{A}$ and $\mathcal{D}$, then for $0\le\eta\le1$,
%\beq\label{eq:analytic-A}
%D^{\frac{\eta}{p}}\A_{a}=\A_{a}D^{\frac{\eta}{p}},\qquad D^{\frac{\eta}{p}}(\A_{a})_0=(\A_{a})_0D^{\frac{\eta}{p}},\qquad
%D^{\frac{\eta}{p}}\mathcal{D}_{a}=\mathcal{D}_{a}D^{\frac{\eta}{p}}
%\eeq
%and for $ 1\le p<\8$,
%\beq\label{eq:analytic-dense}
%[\mathcal{A}_{a}D^{\frac{1}{p}}]_p=H^{p}(\A),\quad [(\mathcal{A}_{a})_0D^{\frac{1}{p}}]_p =H^{p}_0(\A),\quad [\mathcal{D}_{a}D^{\frac{1}{p}}]_p =L^{p}(\mathcal{D}),
%\eeq
%where $(\A_a)_{0}=\{x\in \A_a:\;\mathcal{E}(x)=0\}$.

\begin{proposition}\label{lem:hp} Let $0< p<1,\; 0\le \eta\le1$. Then
\be
H^{p}(\mathcal{A})=[D^{\frac{1-\eta}{p}}\A D^{\frac{\eta}{p}}]_p,\quad H^{p}_0(\mathcal{A})=[D^{\frac{1-\eta}{p}}\A_0 D^{\frac{\eta}{p}}]_p.
\ee
\end{proposition}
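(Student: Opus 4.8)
The plan is to run the same modular-analyticity argument that yields \eqref{eq:equalityHp} in the range $p\ge 1$, and to check that every step survives in the quasi-Banach range $0<p<1$; in particular the proof will not use the case $p\ge 1$ at all. The engine is the maximality relation \eqref{maximal}, namely $\sigma_t^\varphi(\A)=\A$ for all real $t$, together with $\sigma_t^\varphi(x)=D^{it}xD^{-it}$. First I would fix notation: let $\A_a$ denote the set of $\sigma^\varphi$-analytic elements of $\A$, w*-dense in $\A$ via the Gaussian mollifiers $a_r=\sqrt{r/\pi}\int_{\real}e^{-rt^2}\sigma_t^\varphi(a)\,dt$, which satisfy $\|a_r\|_\8\le\|a\|_\8$ and converge $\sigma$-strongly to $a$. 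Since multiplication against a fixed element of $L^p(\M)$ is continuous from bounded $\sigma$-strongly convergent nets into $L^p(\M)$ for every $0<p<\8$ (a standard continuity property of the product in $L_0(\N)$), one gets $a_rD^{1/p}\to aD^{1/p}$ and $D^{(1-\eta)/p}a_rD^{\eta/p}\to D^{(1-\eta)/p}aD^{\eta/p}$ in $L^p$. Hence $[\A_aD^{1/p}]_p=H^p(\A)$ and $[D^{(1-\eta)/p}\A_aD^{\eta/p}]_p=[D^{(1-\eta)/p}\A D^{\eta/p}]_p$, so it suffices to compare the two generating families on analytic elements only.

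The key point is that for $a\in\A_a$ the orbit $z\mapsto\sigma_z^\varphi(a)$ extends to an entire $\M$-valued function taking values in $\A$: applying any $\omega\in\M_*$ that annihilates $\A$ produces a scalar entire function vanishing on $\real$, hence vanishing identically, so $\sigma_z^\varphi(a)\in\A$ for every $z$ by w*-closedness of $\A$. Writing $s=(1-\eta)/p$ and using $\sigma_{is}^\varphi(a)=D^{-s}aD^{s}$, I obtain the operator identity
\beq
D^{\frac{1-\eta}{p}}\,\sigma_{i\frac{1-\eta}{p}}^\varphi(a)\,D^{\frac{\eta}{p}}=aD^{\frac{1}{p}}
\eeq
in $L^p(\M)$, the products being legitimate by the generalized Hölder inequality since $D^{(1-\eta)/p}\in L^{p/(1-\eta)}(\M)$, $\sigma_{i(1-\eta)/p}^\varphi(a)\in\M$ and $D^{\eta/p}\in L^{p/\eta}(\M)$, with $\tfrac{1-\eta}{p}+\tfrac{\eta}{p}=\tfrac1p$. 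Because $\sigma_z^\varphi$ preserves analyticity and maps $\A$ onto $\A$, the map $a\mapsto\sigma_{i(1-\eta)/p}^\varphi(a)$ is a bijection of $\A_a$ onto itself, and the displayed identity shows it carries the generating family $\{aD^{1/p}:a\in\A_a\}$ exactly onto $\{D^{(1-\eta)/p}bD^{\eta/p}:b\in\A_a\}$. Taking closed linear spans and invoking the two density statements above gives $H^p(\A)=[D^{(1-\eta)/p}\A D^{\eta/p}]_p$.

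For $H^p_0(\A)$ the argument is identical once I check that $\A_0$ is also modular invariant with w*-dense analytic part. This follows because $\varphi\circ\E=\varphi$ forces $\E\circ\sigma_t^\varphi=\sigma_t^\varphi\circ\E$, so $\E(\sigma_t^\varphi(a))=\sigma_t^\varphi(\E(a))=0$ for $a\in\A_0$, giving $\sigma_t^\varphi(\A_0)=\A_0$; moreover the mollifiers of $a\in\A_0$ satisfy $\E(a_r)=0$, hence lie in $\A_0$ and are analytic. Restricting the same bijection and identity to $\A_0\cap\A_a$ then yields $H^p_0(\A)=[D^{(1-\eta)/p}\A_0D^{\eta/p}]_p$.

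I expect the main obstacle to be the two analytic points that are genuinely quasi-Banach in nature when $0<p<1$: that the mollified elements $a_rD^{1/p}$ and $D^{(1-\eta)/p}a_rD^{\eta/p}$ converge to their limits in the \emph{quasi}-norm $\|\cdot\|_p$, and that the formal cancellations $D^{s}D^{-s}=\un$ are legitimate as identities of $\tau$-measurable operators, so that the displayed equation truly holds in $L^p(\M)$ rather than only formally. Both reduce to continuity of the product on $L_0(\N)$ under convergence in measure, combined with the Hölder estimates recorded in Section 2; but since $\|\cdot\|_p$ is merely a quasi-norm for $p<1$, these continuity and closure arguments must be verified rather than quoted from the $p\ge 1$ theory.
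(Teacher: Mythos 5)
Your route is genuinely different from the paper's. The paper never touches analytic elements here: it bootstraps from the known identity \eqref{eq:equalityHp} for exponents $\ge 1$ by factoring $D^{1/p}=D^{1/(2p)}D^{1/(2p)}$ (resp. $D^{1/p}=D^{\eta/p}D^{(1-\eta)/p}$ with $p/\eta\ge 1$ or $p/(1-\eta)\ge 1$), commuting $\A$ past the inner power inside $L^{2p}$ or $L^{p/\eta}$ where the Banach-range result applies, and then iterating over the dyadic ranges $\tfrac12\le p<1$, $\tfrac14\le p<\tfrac12$, etc. Your direct modular argument --- Gaussian mollifiers, invariance of $\A_a$ under $\sigma^\varphi_{z}$ for complex $z$ via \eqref{maximal} and the bipolar theorem, and the identity $D^{(1-\eta)/p}\sigma^\varphi_{i(1-\eta)/p}(a)D^{\eta/p}=aD^{1/p}$ --- is the standard mechanism behind \eqref{eq:equalityHp} itself, and the algebraic core of it (the bijection of $\A_a$ and the operator identity) is sound and independent of $p\ge 1$.

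There is, however, a genuine gap in the step you yourself flag as the crux: the density statements $[\A_aD^{1/p}]_p=H^p(\A)$ and $[D^{(1-\eta)/p}\A_aD^{\eta/p}]_p=[D^{(1-\eta)/p}\A D^{\eta/p}]_p$. You justify these by ``continuity of the product on $L_0(\N)$ under convergence in measure,'' but bounded $\sigma$-strong convergence $a_r\to a$ in $\M$ does \emph{not} give convergence of $a_r$ to $a$ in the measure topology of $(\N,\tau)$: in the Haagerup crossed product every nonzero projection $e\in\M$ satisfies $\tau(e)=e^{-t}\tau(e)$ for all $t$, hence $\tau(e)=\infty$, so $\mu_t(x)=\|x\|_\infty$ for all $x\in\M$ and $t>0$ (this is the cited \cite[Theorem 3.1]{L3}), and the measure topology restricted to $\M$ is the operator-norm topology. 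So the mechanism you invoke cannot close this step. It can be repaired, but by a different mechanism: use the generalized H\"older inequality to peel off $D^{(1-\eta)(1/p-1/2)}$ on the left and $D^{\eta(1/p-1/2)}$ on the right (legitimate since $0<p<2$), reducing to
\begin{equation*}
\|D^{\frac{1-\eta}{p}}(a_r-a)D^{\frac{\eta}{p}}\|_p\le \|D^{\frac{1-\eta}{2}}(a_r-a)D^{\frac{\eta}{2}}\|_2,
\end{equation*}
and then control the right-hand side by a three-lines estimate on $z\mapsto tr\bigl(D^{z}(a_r-a)^*D^{1-z}(a_r-a)\bigr)$, which gives the bound $\varphi\bigl((a_r-a)(a_r-a)^*\bigr)^{1-\eta}\varphi\bigl((a_r-a)^*(a_r-a)\bigr)^{\eta}\to 0$ using $\sigma$-strong* (not merely $\sigma$-strong) convergence of the mollifiers. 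With that substitution your proof goes through; without it, the quasi-Banach case $0<p<1$ is exactly where the argument is incomplete. By comparison, the paper's iteration scheme sidesteps all of this at the cost of resting on the $p\ge1$ case, while your version, once repaired, is self-contained and also yields the density of $\A_aD^{1/p}$ in $H^p(\A)$ (which the paper only obtains later, in Lemma \ref{lem:analytic}, by reducing to $p=1$).
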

\begin{proof}
First assume that $\frac{1}{2}\le p<1$. If $0<\eta\le\frac{1}{2}$, by  \eqref{eq:equalityHp}, we have that
\be
[\A D^{\frac{\eta}{p}}]_{\frac{p}{\eta}}=[D^{\frac{\eta}{p}}\A ]_{\frac{p}{\eta}}.
\ee
Hence,
\be
H^{p}(\A)&=&[\A D^{\frac{1}{p}}]_{p}=[[\A D^{\frac{1}{2p}}]_{2p}D^{\frac{1}{2p}}]_p\\
&=&[[D^{\frac{1}{2p}}\A ]_{2p}D^{\frac{1}{2p}}]_p=[D^{\frac{1}{2p}}\A D^{\frac{1}{2p}}]_{p}\\
&=&[D^{\frac{1}{2p}}[ D^{\frac{1}{2p}}\A]_{2p}]_{p}=[D^{\frac{1}{p}}\A ]_{p}\\
&=&[D^{\frac{1-\eta}{p}}[D^{\frac{\eta}{p}}\A]_{\frac{p}{\eta}}]_p=[D^{\frac{1-\eta}{p}}[\A D^{\frac{\eta}{p}}]_{\frac{p}{\eta}}]_p\\
&=&[D^{\frac{1-\eta}{p}}\A D^{\frac{\eta}{p}}]_p.
\ee
If $\frac{1}{2}<\eta<1$, then $0<1-\eta<\frac{1}{2}$. Similarly,
\be
H^{p}(\A)&=&[\A D^{\frac{1}{p}}]_{p}=[[\A D^{\frac{1-\eta}{2p}}]_{\frac{p}{1-\eta}}D^{\frac{\eta}{p}}]_p\\
&=&[[D^{\frac{1-\eta}{p}}\A ]_{\frac{p}{1-\eta}}D^{\frac{\eta}{p}}]_p=[D^{\frac{1-\eta}{p}}\A D^{\frac{\eta}{p}}]_p.
\ee
In the case $\frac{1}{4}\le p<\frac{1}{2}$, we use the result of the first case  and similar method to get that
$$
H^{p}(\mathcal{A})=[D^{\frac{1-\eta}{p}}\A D^{\frac{\eta}{p}}]_p,\qquad \mbox{for each}\; 0<\eta\le1. 
$$ Iterating
this procedure, we obtain the desired result.

The second result follows analogously.
\end{proof}

For $1\le p<\8$,  $H^p(\A)$ is independent of $\varphi$ (see  \cite[Theorem 2.5]{J1}). Next, we extend this result for the case $0< p<1$.
Let $\psi$ be a
faithful normal state such that $\psi\circ\E=\psi$. Denote by $D_{\psi}$ the Radon-Nikodym derivative
of  $\psi$'s the dual
weight $\hat{\psi}$   with respect to
$\tau$. Then for $x\in\mathcal{N}_{+}$ and $0<p<\8$,
\beq\label{eq:psi}
\hat{\psi}(x)=\tau(D_{\psi}x)\qquad \mbox{and}\qquad D_{\psi}^\frac{1}{p}\in L^{p}(\mathcal{D})_{+}.
\eeq
%By  \cite[Theorem 2.5]{J1}, for $1\le p<\8$, we have that
%\beq\label{eq:independent-psi-1}
%[\mathcal{A}D_\psi^{\frac{1}{p}}]_{p}=[\mathcal{A}D^{\frac{1}{p}}]_{p},\qquad [\mathcal{A}_{0}D_\psi^{\frac{1}{p}}]_{p}=
%[\mathcal{A}_{0}D^{\frac{1}{p}}]_{p}
%\eeq
%and
%\beq\label{eq:independent-psi-2}
%[\mathcal{A}D_\psi^{\frac{1}{p}}]_{p}=[\mathcal{A}D^{\frac{1}{p}}]_{p},\qquad [\mathcal{A}_{0}D_\psi^{\frac{1}{p}}]_{p}=
 %[\mathcal{A}_{0}D^{\frac{1}{p}}]_{p}
%\eeq
We have the following result.
\begin{theorem}\label{pro:hp-idependent} Let $0< p<1$ and $\psi$ be as in the above. Then
\be
[\mathcal{A}D_\psi^{\frac{1}{p}}]_{p}=[\mathcal{A}D^{\frac{1}{p}}]_{p},\qquad [\mathcal{A}_{0}D_\psi^{\frac{1}{p}}]_{p}=
 [\mathcal{A}_{0}D^{\frac{1}{p}}]_{p}
\ee
and
\be
[\mathcal{D}D_\psi^{\frac{1}{p}}]_{p}=
 [\mathcal{D}D^{\frac{1}{p}}]_{p}.
\ee
\end{theorem}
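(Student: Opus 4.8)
My plan is to prove the three identities in the order $\mathcal{D}$, then $\A$, then $\A_0$, since the diagonal identity is the device that drives the other two. Throughout I would use the factorization (``splitting'') trick already exploited in the proof of Proposition~\ref{lem:hp}: if $\frac1q+\frac1r=\frac1p$ and $a\in L^r(\M)$, then the H\"older continuity of the product map $L^q(\M)\times L^r(\M)\to L^p(\M)$ (see \cite[Lemma 1.1]{JX}) gives $[[K]_q\,a]_p=[Ka]_p$ for any $K\subset L^q(\M)$, and symmetrically for a factor inserted on the left; I would also use that left or right multiplication by a fixed element of $\M$ is bounded on $L^p(\M)$ for every $0<p\le\infty$, so that bounded multiples may be moved in and out of a bracket freely.

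For the diagonal identity, valid for every $0<p<\infty$, I would argue by two inclusions. By \eqref{eq:psi} we have $D_\psi^{1/p}\in L^p(\mathcal{D})$, and by \eqref{eq:Lp(D)-property} (with $\eta=1$) $L^p(\mathcal{D})=[\mathcal{D}D^{1/p}]_p$; hence $D_\psi^{1/p}\in[\mathcal{D}D^{1/p}]_p$, and moving the bounded left factors of $\mathcal{D}$ inside gives $[\mathcal{D}D_\psi^{1/p}]_p\subset[\mathcal{D}D^{1/p}]_p$. Since $\varphi$ and $\psi$ enter the hypotheses symmetrically ($\varphi\circ\E=\varphi$ and $\psi\circ\E=\psi$), interchanging their roles yields the reverse inclusion, so $[\mathcal{D}D_\psi^{1/p}]_p=[\mathcal{D}D^{1/p}]_p=L^p(\mathcal{D})$. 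Applying the splitting trick once on each side I would also record the two-sided form $[D^{1/s}\mathcal{D}D^{1/s}]_p=L^p(\mathcal{D})$ with $s=2p$, which I need below.

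For $\A$ (and then verbatim for $\A_0$) I would run a downward induction on $p$, exactly as the ``iterating this procedure'' step in Proposition~\ref{lem:hp}. The case $p\ge1$ is \cite[Theorem 2.5]{J1}. Assuming the statement for all exponents $\ge 2p$, put $s=2p$ and split $\frac1p=\frac1s+\frac1s$:
$$[\A D_\psi^{1/p}]_p=\big[[\A D_\psi^{1/s}]_s\,D_\psi^{1/s}\big]_p=\big[[\A D^{1/s}]_s\,D_\psi^{1/s}\big]_p=[\A D^{1/s}D_\psi^{1/s}]_p,$$
the middle step being the inductive hypothesis at exponent $s$. It then remains to identify $[\A D^{1/s}D_\psi^{1/s}]_p$ with $[\A D^{1/p}]_p$. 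Here I would use the diagonal identity at exponent $s$ to write $D_\psi^{1/s}$ as an $L^s$-limit of $d_nD^{1/s}$ with $d_n\in\mathcal{D}$, so that $D^{1/s}D_\psi^{1/s}$ is an $L^p$-limit of $D^{1/s}d_nD^{1/s}\in[D^{1/s}\mathcal{D}D^{1/s}]_p=L^p(\mathcal{D})=[\mathcal{D}D^{1/p}]_p$; pulling a factor $a\in\A$ through and using $\A\mathcal{D}\subset\A$ shows $aD^{1/s}D_\psi^{1/s}\in[\A D^{1/p}]_p$, hence $[\A D^{1/s}D_\psi^{1/s}]_p\subset[\A D^{1/p}]_p$, with the reverse inclusion again by the $\varphi\leftrightarrow\psi$ symmetry. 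Iterating over the intervals $[2^{-k-1},2^{-k})$ covers all $0<p<1$; the $\A_0$ identity follows by the same computation, using $\mathcal{D}\A_0\subset\A_0$ and $\A_0\mathcal{D}\subset\A_0$, which hold because $\E$ is multiplicative on $\A$.

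The genuine obstacle is the non-commutativity of $D$ and $D_\psi$: there is no central relation turning $D_\psi^{1/p}$ into a multiple of $D^{1/p}$, so the two generating families cannot be compared termwise. The strategy is to funnel this discrepancy entirely through the diagonal --- the third identity says precisely that $D_\psi^{1/p}$ and $D^{1/p}$ generate one and the same copy of $L^p(\mathcal{D})$ inside $L^p(\M)$ --- after which the inclusions $\mathcal{D}\subset\A$ and $\mathcal{D}\A_0,\A_0\mathcal{D}\subset\A_0$ let one reabsorb that discrepancy without leaving $\A$ (respectively $\A_0$). The only delicate point is the interchange of $L^p$-limits with left multiplication by $a$ and with the two-sided factor $D^{1/s}(\cdot)D^{1/s}$; this is controlled by the continuity of the product $L^s(\M)\times L^s(\M)\to L^p(\M)$, which is also exactly what legitimizes the downward induction.
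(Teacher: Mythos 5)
Your treatment of the diagonal identity coincides with the paper's: $D_\psi^{1/p}\in L^p(\mathcal{D})=[\mathcal{D}D^{1/p}]_p$ by \eqref{eq:psi} and \eqref{eq:Lp(D)-property}, absorb the bounded left factors from $\mathcal{D}$, and interchange $\varphi$ and $\psi$ for the reverse inclusion. For the $\A$ and $\A_0$ identities, though, you take a genuinely different and longer route. The paper does not induct on $p$ at all: once $[\mathcal{D}D_\psi^{1/p}]_p=[\mathcal{D}D^{1/p}]_p$ is in hand, it simply writes
\begin{equation*}
\A D_\psi^{\frac1p}\subset \A\,[\mathcal{D}D_\psi^{\frac1p}]_p=\A\,[\mathcal{D}D^{\frac1p}]_p\subset[\A D^{\frac1p}]_p,
\end{equation*}
using only $\A\mathcal{D}\subset\A$ and the continuity of left multiplication by a fixed element of $\M$ on $L^p(\M)$, and the same chain with $\varphi$ and $\psi$ exchanged gives the opposite inclusion --- one step, uniform in $0<p<1$, with no appeal to the case $p\ge1$, no H\"older splitting $\tfrac1p=\tfrac1{2p}+\tfrac1{2p}$, and no need for the two-sided identity $[D^{1/s}\mathcal{D}D^{1/s}]_p=L^p(\mathcal{D})$. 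Your downward induction is nevertheless correct: each splitting step is legitimized by the continuity of the product $L^s(\M)\times L^s(\M)\to L^p(\M)$ exactly as in Proposition~\ref{lem:hp}, the base case is \cite[Theorem 2.5]{J1}, and the final absorption of $D^{1/s}D_\psi^{1/s}$ through $L^p(\mathcal{D})=[\mathcal{D}D^{1/p}]_p$ is the same mechanism the paper exploits directly. What the paper's version buys is brevity and the observation that the whole statement is a formal consequence of the diagonal identity alone; your version buys nothing extra here. One small point to tighten: the ``reverse inclusion by symmetry'' should be stated as $[\A D^{1/p}]_p\subset[\A D_\psi^{1/p}]_p$, obtained by rerunning your entire chain with the roles of $\varphi$ and $\psi$ swapped, rather than as a reversed inclusion of the intermediate set $[\A D^{1/s}D_\psi^{1/s}]_p$, which symmetry does not directly supply.
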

\begin{proof}
Since $[\mathcal{D}D^{\frac{1}{p}}]_{p}=L^{p}(\mathcal{D})$, by \eqref{eq:psi}, we get that
$$
[\mathcal{D}D_\psi^{\frac{1}{p}}]_{p}\subset
 [\mathcal{D}D^{\frac{1}{p}}]_{p}.
 $$
 On the other hand, we have that
 $$
 [\mathcal{D}D_\psi^{\frac{1}{p}}]_{p}=L^{p}(\mathcal{D})\quad\mbox{and}\quad D^\frac{1}{p}\in L^{p}(\mathcal{D})_{+}.
 $$
  Hence, we obtain that
  $$
  [\mathcal{D}D^{\frac{1}{p}}]_{p}\subset
 [\mathcal{D}D_\psi^{\frac{1}{p}}]_{p},
 $$
 and so
 $$
 [\mathcal{D}D_\psi^{\frac{1}{p}}]_{p}=
 [\mathcal{D}D^{\frac{1}{p}}]_{p}.
 $$
 It is clear that
 $$\A D_\psi^{\frac{1}{p}}\subset\A[\mathcal{D}D_\psi^{\frac{1}{p}}]_{p}=\A
 [\mathcal{D}D^{\frac{1}{p}}]_{p}\subset[\A D^{\frac{1}{p}}]_{p}
 $$
 and
 $$
 \A D^{\frac{1}{p}}\subset\A[\mathcal{D}D^{\frac{1}{p}}]_{p}=\A
 [\mathcal{D}D_\psi^{\frac{1}{p}}]_{p}\subset[\A D_\psi^{\frac{1}{p}}]_{p}.
 $$
 Therefore, we deduce that the first equality holds. We use similar method to prove  the second equality.
\end{proof}

Next,  we review the basic aspects of Hagrupp's reduction theorem, as described in \cite{H2,X}. Let $G=\cup_{n\ge1}2^{-n}\mathbb{Z}$  and let $\mathcal{R}$ denote
 $\mathcal{M}\rtimes_{\sigma^\varphi} G$  the discrete crossed product with respect to the modular automorphism group $\{\sigma_{t}^\varphi\}_{t\in \real}$.
 Consequently,  $\mathcal{R}$ emerges as a von Neumann algebra on the Hilbert space $\el_2(G,H)$, which is generated by the set of operators $\pi(x),\; x\in \M$ and $\lambda(t),\; s \in G$. These operators are  defined for any $\xi\in\el_2(G,H)$ and $t \in G$ by the following relations:
\be
(\pi(x)\xi)(t)=\sigma_{-t}(x)\xi(t),\quad (\lambda(s)\xi)(t)=\xi(t-s).
\ee
In this case $\pi$ gives a normal faithful representation of $\M$ on
$\el_2(G,H)$, so we identify $\pi(\M)$ with $\M$.  Furthermore, the operators $\pi(x)$ and $\lambda(t)$ fulfill the commutation relation delineated as
\beq\label{representation1}
\lambda(t)\pi(x)\lambda(t)^*=\pi(\sigma_{t}^\varphi(x)),\quad\forall t\in G,\;\forall x\in \M.
\eeq
 Let $\widehat{\varphi}$ be the dual weight of $\varphi$ on $\mathcal{R}$. Then $\widehat{\varphi}$ is again a faithful
normal state on  $\mathcal{R}$. It is restriction on $\mathcal{M}$ to coincide with  $\varphi$. There exists a unique normal faithful conditional expectation $\Phi$ mapping from  $\mathcal{R}$ onto  $\mathcal{M}$,  which  satisfies
$$
\widehat{\varphi}\circ\Phi = \widehat{\varphi},\quad\sigma_{t}^{\widehat{\varphi}}\circ\Phi=\Phi\circ\sigma_{t}^{\widehat{\varphi}}, \quad\forall t\in \real.
$$
According to Haagerup's reduction theorem (Theorem 2.1 and Lemma 2.7 in \cite{H2}), there is an
increasing sequence of von Neumann subalgebras  $\{\mathcal{R}_n\}_{n\ge1}$ within  $\mathcal{R}$, which satisfies  the following conditions:
\begin{enumerate}
\item For each  $n\in\mathbb{N}$, $\mathcal{R}_n$ is a finite von Neumann algebra;
\item  For every $n\in\mathbb{N}$, there  is a faithful normal conditional expectation from $\mathcal{R}$ onto $\mathcal{R}_n$ such that for any $t\in \real$,
$$
\hat{\varphi}\circ\Phi_n = \hat{\varphi},\quad  \sigma_{t}^{\widehat{\varphi}}\circ\Phi_n=\Phi_n\circ\sigma_{t}^{\widehat{\varphi}},\quad \Phi_n \circ\Phi_{n+1} = \Phi_n;
$$
\item For any $x\in\R$, $\Phi_n(x)$ converges to $x$  in the $\sigma$-strong  topology as $n\rightarrow\8$;
\item The $\bigcup_{n\ge1} \mathcal{R}_n$ is dense in $\mathcal{R}$ in the  $\sigma$-weak topology.
 \end{enumerate}

It is clear that $\mathcal{D}$ remains invariant under $\sigma^\varphi_t$. Hence,  the restriction $\sigma^\varphi_t|_{\mathcal{D}}$ of $\sigma^\varphi_t$ on $\mathcal{D}$  corresponds exactly to the modular automorphism group for the restricted state $\varphi|_{\mathcal{D}}$, there arises no necessity for differentiation between $\varphi$ and $\varphi|_{\mathcal{D}}$ or between  $\sigma^\varphi_t$and its restriction $\sigma^{\varphi}_t|_{\mathcal{D}}$, respectively.  We thus define
$$
\widehat{\mathcal{D}} = \mathcal{D} \rtimes_{\sigma^\varphi} G,
$$
$\widehat{\mathcal{D}}$ where is recognized as a von Neumann subalgebra of  $\mathcal{R}$, generated by all operators $\pi(x),\; x \in \mathcal{D}$ and
$\lambda(t),\; t \in G$. The expectation $\E$  extends to a normal faithful conditional expectation $\widehat{\mathcal{E}}$ from $\mathcal{R}$ onto $\widehat{\mathcal{D}}$, uniquely determined by the relation for all $x\in\M$ and $t\in G$:
\beq\label{conditional expectation}
\widehat{\mathcal{E}}(\lambda(t)x)=\lambda(t)\mathcal{E}(x).
\eeq
This expectation fulfills the compatibility condition
$$
\widehat{\mathcal{E}}\circ\Phi_n =\Phi_{n}\circ\widehat{\mathcal{E}},\quad \forall n\in\mathbb{N}.
$$
For natural number  $n\in\mathbb{N}$, we define $\mathcal{D}_n$ as the intersection of  $\mathcal{R}_n$ and  $\widehat{\mathcal{D}}$. It is essential to recognize that the restriction of $\Phi_n$ to  $\widehat{\mathcal{D}}$, and the restriction of $\widehat{\mathcal{E}}$
 to $\mathcal{R}_n$, both serve as normal, faithful conditional expectations onto $\mathcal{D}_n$ from  $\widehat{\mathcal{D}}$
 and $\mathcal{R}_n$, respectively.

Given that the invariance of  $\A$ under  $\sigma_{t}^{\varphi}$. By \eqref{representation1},  the set of all finite linear combinations of
$\lambda(t) \pi(x)$, with  $t \in G,\; x \in \A$, constitutes a subalgebra within  $\R$. We denote by $ \widehat{\mathcal{A}}$ the $\sigma$-weak closure of this set  in
$\mathcal{R}$, i.e.,
\beq\label{defi:Ahat}
\widehat{\mathcal{A}}=\overline{span\{\lambda(t) \pi(x):\;t \in G,\; x \in \A\}}^{\sigma-weakly }.
\eeq
For each  $n\in\mathbb{N}$, let $\mathcal{A}_n =\widehat{\mathcal{A}}\cap \mathcal{R}_n$. Referencing  Lemma 3.1-3.3 in \cite{X}, we get that $ \widehat{\mathcal{A}}$ (resp.  $\mathcal{A}_n$) is a maximal
subdiagonal subalgebra of $\mathcal{R}$ (resp. $\mathcal{R}_n$) with respect to $ \widehat{\mathcal{E}}$ (resp. $ \widehat{\mathcal{E}}|_{\mathcal{R}_n}$).

For Haagerup noncommutative
$H^p$-space, there is an analogue of \cite[ Theorem 3.1]{H2} (see \cite[Theorem 2.6]{BR1} ). We will use this result in next section.

 \begin{theorem}\label{thm:denseHp} Let $\M,\; \A,\;\E,\;\R,\;\widehat{\A},\;\widehat{\E},\;\Phi,\;\A_n,\;\Phi_n\; (\forall n\in \mathbb{N})$ be fixed as in the above and let $0 < p < \8$.
\begin{enumerate}
\item $\{H^p(\mathcal{A}_n)\}_{n\ge1}$ is a  increasing sequence of subspaces of $H^p(\widehat{\A}\:)$;
\item $\bigcup_{n\ge1}H^p(\mathcal{A}_n)$  is dense in  $H^p(\widehat{\A}\:)$;
\item For each $n$, $H^p(\mathcal{A}_n)$ is isometric to the usual
noncommutative $H^p$-space associated with a finite
subdiagonal algebra.
\item $H^p(\mathcal{A})$  and all   $H^p(\mathcal{A}_n)$
are 1-complemented in $H^p(\widehat{\A}\:)$ for $1\le p <\8$.
\end{enumerate}
 \end{theorem}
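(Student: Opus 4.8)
The plan is to transport the entire statement to the crossed product $\mathcal{R}$ and to lean on the structural facts from Haagerup's reduction, treating the four assertions in the order (1), (3), (2), (4). Throughout I work inside the Haagerup space $L^p(\mathcal{R})$ built from the dual state $\widehat{\varphi}$, whose density I denote by $D$; by Theorem \ref{pro:hp-idependent} (together with \cite[Theorem 2.5]{J1} for $p\ge1$) and Proposition \ref{lem:hp}, the $H^p$-spaces involved do not depend on this choice and admit the symmetric description $[D^{\frac{1-\eta}{p}}\,\cdot\,D^{\frac{\eta}{p}}]_p$, which I will use freely.

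Part (1) is essentially monotonicity of the underlying algebras. Since $\{\mathcal{R}_n\}$ is increasing and $\mathcal{A}_n=\widehat{\mathcal{A}}\cap\mathcal{R}_n$, we get $\mathcal{A}_n\subseteq\mathcal{A}_{n+1}\subseteq\widehat{\mathcal{A}}$; viewing each $H^p(\mathcal{A}_n)=[\mathcal{A}_n D^{\frac{1}{p}}]_p$ as a closed subspace of $L^p(\mathcal{R})$, the inclusions $H^p(\mathcal{A}_n)\subseteq H^p(\mathcal{A}_{n+1})\subseteq H^p(\widehat{\mathcal{A}})$ follow immediately from the definition of the closed span. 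For part (3) I would use that each $\mathcal{R}_n$ is \emph{finite} and, by Lemmas 3.1--3.3 of \cite{X}, that $\mathcal{A}_n$ is a maximal subdiagonal subalgebra of $\mathcal{R}_n$ relative to $\widehat{\mathcal{E}}|_{\mathcal{R}_n}$. Because $\widehat{\varphi}\circ\Phi_n=\widehat{\varphi}$, the Haagerup space $L^p(\mathcal{R}_n)$ embeds isometrically into $L^p(\mathcal{R})$ with $x\mapsto xD^{\frac{1}{p}}$ identifying $[\mathcal{A}_n D^{\frac{1}{p}}]_p$ with the Haagerup $H^p$ of $\mathcal{A}_n$; as $\mathcal{R}_n$ is finite, that Haagerup $H^p$ is in turn isometric to the usual tracial noncommutative $H^p$ of the finite subdiagonal algebra $\mathcal{A}_n$. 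The point demanding care here is matching the density $D$ with the tracial density on $\mathcal{R}_n$, but this compatibility is exactly what $\widehat{\varphi}$-preservation of $\Phi_n$ supplies.

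The heart of the argument is part (2), and this is where I expect the main obstacle. For $p\ge1$ one simply applies the contractive projections $\Phi_n$ to an arbitrary element of $H^p(\widehat{\mathcal{A}})$; for $0<p<1$ the maps $\Phi_n$ are no longer bounded on $L^p(\mathcal{R})$, so that route is closed. Instead I would argue on generators. The finite linear combinations $\widehat{a}\,D^{\frac{1}{p}}$ with $\widehat{a}\in\mathrm{span}\{\lambda(t)\pi(x):t\in G,\ x\in\mathcal{A}\}$ are dense in $H^p(\widehat{\mathcal{A}})$ (via Kaplansky density and the continuity lemma below), and by the compatibility $\widehat{\mathcal{E}}\circ\Phi_n=\Phi_n\circ\widehat{\mathcal{E}}$ together with the construction in \cite{X} one has $\Phi_n(\widehat{a})\in\widehat{\mathcal{A}}\cap\mathcal{R}_n=\mathcal{A}_n$ with $\Phi_n(\widehat{a})\to\widehat{a}$ $\sigma$-strongly and uniformly bounded. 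The remaining ingredient is the continuity lemma: if $b_k\to0$ $\sigma$-strongly with $\sup_k\|b_k\|_\infty<\infty$, then $\|b_k D^{\frac{1}{p}}\|_p\to0$ for every $0<p<\infty$, i.e.\ right multiplication by $D^{\frac{1}{p}}$ is continuous from the unit ball of $\mathcal{R}$ (with the $\sigma$-strong topology) into $L^p(\mathcal{R})$. Applying this to $b_k=\widehat{a}-\Phi_k(\widehat{a})$ gives $\Phi_k(\widehat{a})D^{\frac{1}{p}}\to\widehat{a}D^{\frac{1}{p}}$ in $L^p$ with $\Phi_k(\widehat{a})D^{\frac{1}{p}}\in H^p(\mathcal{A}_k)$, whence $\bigcup_n H^p(\mathcal{A}_n)$ is dense. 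I would prove the lemma by splitting $D^{\frac{1}{p}}$ via the spectral projections $e_\e=\mathbf 1_{[\e,\infty)}(D)$: the tail is controlled by $\|b_k D^{\frac{1}{p}}(1-e_\e)\|_p\le\|b_k\|_\infty\,\|D^{\frac{1}{p}}(1-e_\e)\|_p$, which is small uniformly in $k$, while for the bounded piece $y=D^{\frac{1}{p}}e_\e$ one uses that $\sigma$-strong convergence yields strong-operator convergence in the standard form, so $b_ky\to0$. Carrying out this bookkeeping in the quasi-Banach range $p<1$, where no duality is available, is what I regard as the crux.

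Finally, for part (4), restricted to $1\le p<\infty$, I would invoke that the $\widehat{\varphi}$-preserving normal conditional expectations $\Phi:\mathcal{R}\to\mathcal{M}$ and $\Phi_n:\mathcal{R}\to\mathcal{R}_n$ extend to contractive projections on $L^p(\mathcal{R})$. It then remains to check that $\Phi$ maps $H^p(\widehat{\mathcal{A}})$ onto $H^p(\mathcal{A})$ and fixes it, and likewise $\Phi_n$ onto $H^p(\mathcal{A}_n)$; this is verified by testing on the generators $\widehat{a}\,D^{\frac{1}{p}}$, using the intertwining $\widehat{\mathcal{E}}\circ\Phi_n=\Phi_n\circ\widehat{\mathcal{E}}$ and the defining relation \eqref{conditional expectation}, exactly as in \cite[Theorem 2.6]{BR1}. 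The resulting norm-one projections give the claimed $1$-complementation, and the restriction to $p\ge1$ is forced precisely because these projections fail to be bounded for $p<1$.
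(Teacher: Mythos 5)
First, a caveat about the comparison itself: the paper does not prove this statement at all --- it is imported verbatim from \cite[Theorem 2.6]{BR1} as the Haagerup-$H^p$ analogue of \cite[Theorem 3.1]{H2} --- so there is no in-paper proof to measure you against, and I can only judge your argument on its merits. Your overall architecture is the standard (and correct) one: monotonicity of $\mathcal{A}_n=\widehat{\mathcal{A}}\cap\mathcal{R}_n$ for (1), the $\widehat{\varphi}$-preserving embeddings $L^p(\mathcal{R}_n)\hookrightarrow L^p(\mathcal{R})$ matching the densities for (3), the extended contractive expectations $\Phi,\Phi_n$ for (4), and, for (2) in the range $0<p<1$ where $\Phi_n$ is unavailable, the reduction to generators $\widehat{a}\,D^{1/p}$ together with $\Phi_n(\widehat{a})\in\mathcal{A}_n$, $\Phi_n(\widehat{a})\to\widehat{a}$ $\sigma$-strongly and boundedly. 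You also correctly isolate the continuity lemma (``$b_k\to0$ $\sigma$-strongly and bounded implies $\|b_kD^{1/p}\|_p\to0$'') as the crux.

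The genuine gap is that your proposed proof of that lemma does not work. The pieces $D^{1/p}(1-e_\e)$ and $D^{1/p}e_\e$ obtained by spectral truncation of $D$ are not homogeneous under the dual action, so they are not elements of the Haagerup space $L^p(\mathcal{R})$ and the quantity $\|D^{1/p}(1-e_\e)\|_p$ has no meaning there; if one instead reads it as the tracial $L^p$-quasinorm in the ambient crossed product, it equals $+\infty$, since $\mu_t(D^{1/p})=t^{-1/p}$ lies only in weak $L^p$ and $\tau\big(D\,\mathbf{1}_{[0,\e)}(D)\big)=\int_0^\e s^{-1}\,ds=\infty$. Moreover $D^{1/p}e_\e=D^{1/p}\mathbf{1}_{[\e,\8)}(D)$ is still an unbounded operator, so the ``bounded piece'' step fails as well. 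The lemma itself is true and has a short correct proof that you should substitute: for $p=2$ one has $\|b_kD^{1/2}\|_2^2=tr(D^{1/2}b_k^*b_kD^{1/2})=\widehat{\varphi}(b_k^*b_k)\to0$, since $\sigma$-strong convergence means precisely that $\omega(b_k^*b_k)\to0$ for every normal state $\omega$; and for $0<p<2$ (which covers the range $0<p<1$ you actually need, the rest being handled by the projections) H\"older gives $\|b_kD^{1/p}\|_p\le\|b_kD^{1/2}\|_2\,\|D^{1/r}\|_r$ with $\frac1p=\frac12+\frac1r$ and $\|D^{1/r}\|_r=tr(D)^{1/r}=1$. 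This is the argument of \cite[Lemma 1.1]{JX} and of \cite{H2}; with that replacement your proof of (2), and hence of the whole statement, goes through.
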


\section{ Complex interpolation theorem of the Haagerup
 $H^{p}$-spaces}

Let us  briefly revisit  the complex interpolation method that will be used herein. Consider the pair  $(X_0 ,X_1)$, which constitutes a couple for interpolation within the framework of quasi Banach spaces  be an interpolation couple of quasi Banach spaces. That is,  $X_j,\; j = 0,\;1$ are continuously embedded into a larger topological vector space $Y$, and
$X_0 \cap X_1$ is dense in $X_j,\; j = 0,\;1$. We reference $S$ (respectively, $\overline{S}$ ) to denote the open strip in the complex plane  $\mathbb{C}$, defined by $\{z:\;0<\mathrm{Re}z<1\}$ (respectively, the closed strip $\{z:\;0\le \mathrm{Re}z\le1\}$ )
We denote by $S$ (respectively, $\overline{S}$ ). Let
\be
 \F(X_0, X_1)=\left\{ f: \begin{array}{l}
 f : S \rightarrow X_0 + X_1\; \mbox{is a bounded, analytic }\\
 \mbox{function, which extend continuously to}\; \overline{S}  \\
 \mbox{ such that the traces}\; t\rightarrow f(j+it) \;  \mbox{ } \\
 \mbox{are bounded continuous functions}\\
 \mbox{ into}\; X_j,\; j = 0,\;1
\end{array}
\right\}.
 \ee
 The norm on $\F(X_0, X_1)$ is defined by
 \be
 \big\|f\big\|_{\F(X_0, X_1)}=\max\left\{\begin{array}{c}
 \sup_{t\in\real}\big\|f(it)\big\|_{X_0}\,,\;
 \sup_{t\in\real}\big\|f(1+it)\big\|_{X_1},\\
 \sup_{z\in S}\big\|f(z)\big\|_{X_0+X_1}
\end{array} \right\},
\ee
thereby endowing $\F(X_0, X_1)$ with a quasi Banach structure.   The complex interpolation space is defined as
\be
(X_0,X_1)_\theta =\left\{x:\; \exists\: f\in\F(X_0, X_1)\;\mbox{such that }\;f(\theta)=x\right\}\quad(0<\theta<1)
\ee
and
 \be
 \|x\|_{(X_0,\; X_1)_\theta}=\inf\big\{\big\|f\big\|_{\F(X_0, X_1)}\;:\; f(\theta)=x,\;
 f\in\F(X_0, X_1)\big\}.
 \ee
We recall that $ (X_0,X_1)_\theta$ is a quasi Banach space for $0<\theta<1$ (see \cite{KM}).

Furthermore, it is to be noted that the use of the third term is redundant when the maximum modulus principle is applicable.  However, this principle is not universal for all quasi Banach spaces, it is valid for Banach space. Hence, if $(X_0, X_1)$ is a compatible couple of complex Banach spaces, then the definition of the interpolation spaces here coincides with that of \cite{Ca}. It is well-known that the maximum  principle  fails for some quasi
Banach spaces (see \cite{Pe1}). There is
an important subclass of quasi Banach spaces called analytically convex,  as described in  \cite{K}, in which
the maximum modulus principle does hold. This convexity is characterized  via the condition:
\beq\label{eq:kalton}
\sup_{z\in S}\big\|f(z)\big\|_{X}\le C \max\{
 \sup_{t\in\real}\big\|f(it)\big\|_{X}\,,\;
 \sup_{t\in\real}\big\|f(1+it)\big\|_{X}\}
\eeq
for any analytic function $f: S\rightarrow X$ which is continuous on the closed strip
$\overline{S}$ (see \cite{KMM}).

The space $A(S)$ comprises complex valued functions that are analytic within $S$ and continuous and bounded
in  $\overline{S}$. The subset $A\F(X_0, X_1)$  includes functions that can be represented as finite sums of the form   $f(z)=\sum_{k=1}^n f_k(z)x_k$ with $f_k$ in $A(S)$ and $x_k$ in $X_0\cap X_1$.
Endowed  with the  (quasi)  norm
 \be
 \big\|f\big\|_{A\F(X_0, X_1)}=\max\left\{\begin{array}{c}
 \sup_{t\in\real}\big\|f(it)\big\|_{X_0}\,,\;
 \sup_{t\in\real}\big\|f(1+it)\big\|_{X_1},\\
 \sup_{z\in S}\big\|f(z)\big\|_{X_0+X_1}
\end{array} \right\},
\ee
$A\F(X_0, X_1)$ becomes a (quasi) normed  space. We define the
complex interpolation (quasi) norm $\|\cdot\|_\theta\;(0<\theta<1)$ on $X_0 \cap X_1$ as follows
 \be
 \|x\|_\theta=\inf\big\{\big\|f\big\|_{A\F(X_0, X_1)}\;:\; f(\theta)=x,\;
 f\in A\F(X_0, X_1)\big\}.
 \ee
We denote by $[X_0,X_1]_\theta$ the completion
of $(X_0 \cap X_1, \|\cdot\|_\theta)$.

Let $0<p_0<p_1<\8$. Then   $L^{p_0,\8}_0(0,\8)$ and  $L^{p_1,\8}_0(0,\8)$ are  separable symmetric quasi Banach function spaces on $(0,\8)$ and
$s_j$-convex for  $\frac{1}{p_j} < s_j < \8$, $j=0,\; 1$.  It is clear that
$$
L^{p_0,\8}_0(\N)+L^{p_1,\8}_0(\N)=(L^{p_0,\8}_0(0,\8)+L^{p_1,\8}_0(0,\8))(\N)
$$
 in the sense of equivalence of quasi norms (see \cite[Theorem 5]{BO1}).  Since  $L^{p_0,\8}_0(0,\8)+L^{p_1,\8}_0(0,\8)$ is  separable and  $s$-convex ($s=\min\{s_1,s_2\}$), we use the method in the proof of \cite[Lemma 1]{BO1} to obtain that there is an equivalent quasi norm $\|\cdot \|$ on $L^{p_0,\8}_0(\N)+L^{p_1,\8}_0(\N)$ which is plurisubharmonic. According to \cite[Lemma 4.5]{X1}, the  separability and  $s$-convexity of  $L^{p_0,\8}_0(0,\8) + L^{p_1,\8}_0(0,\8)$ imply that  $S(\N)$ is dense in $L^{p_0,\8}_0(\N) + L^{p_1,\8}_0(\N)$. Using the same method as  in the proof of \cite[Theorem 4.4]{X1},   we  obtain that
\beq\label{eq:densety-analytic function space}
 A \F(L^{p_0,\8}_0(\N),L^{p_1,\8}_0(\N))\quad \mbox{dense in}\quad \F(L^{p_0,\8}_0(\N),L^{p_1,\8}_0(\N)).
\eeq
 Hence, we have that
\beq\label{eq:equivalent-interpolation}
[L^{p_0,\8}_0(\N),L^{p_1,\8}_0(\N)]_\theta=(L^{p_0,\8}_0(\N),L^{p_1,\8}_0(\N))_\theta
\eeq
with equivalent norms.

We revisit the concept of embedding as it pertains to the spaces $L^{p_1}(\M)$ and $L^{p_0}(\M)$ , as described in \cite{Kos}.
Given  $0<p_0<p_1\le\8$,
we consider the embedding for any $0\leq\eta\leq1$as follows
$$
i_{p_1}^{\eta}:
x\mapsto D^{\frac{\eta}{r}}xD^{\frac{1-\eta}{r}},
$$
subject to the relation $\frac{1}{p_1}+\frac{1}{r}=\frac{1}{p_0}$. Here, we define  $L^{p_1}(\M)^\eta$ as
$$
L^{p_1}(\M)^\eta=i_{p_1}^{\eta}(L^{p_1}(\M))=D^{\frac{\eta}{r}}L^{p_1}(\M)D^{\frac{1-\eta}{r}}.
$$
For $ x\in L^{p_1}(\M)$, the norms are given by
\be
 \|i_{p_1}^{\eta}(x)\|_{p_1}^{\eta}=
     \|D^{\frac{\eta}{r}}xD^{\frac{1-\eta}{r}}\|_{p_1}^{\eta}=\|x\|_{p_1},\quad\|x\|_{p_0}^\eta=\|D^{\frac{\eta}{r}}xD^{\frac{1-\eta}{r}}\|_{p_0}.
\ee
In the following, we do not consider the powers of the norms $\|\cdot\|_{p_1}$ or $\|\cdot\|_{p_0}$. Therefore, the notations used above will not lead to any confusion.
According to
$\|\cdot\|_{p_0}^{\eta}\leq\|\cdot\|_{p_1}^{\eta}$ within  the subspace $L^{p_1}(\M)^\eta$
of $L^{p_0}(\M)$,  we get the compatibility of the pair $(L^{p_1}(\M)^\eta, L^{p_0}(\M))$.

For $p_0<p\le p_1$ and $0\leq\eta\leq1$,  we define the imbedding
$$
i_{p}^{\eta}: L^{p}(\M)\hookrightarrow L^{p_0}(\M)
$$
via the mapping
$x\mapsto D^{\frac{\eta}{q}}xD^{\frac{1-\eta}{q}}$,
where $\frac{1}{p}+\frac{1}{q}=\frac{1}{p_0}$. Consequently, we assert that
$$
i_{p}^{\eta}(
L^{p}(\M))=D^{\frac{\eta}{q}}L^{p}(\M)D^{\frac{1-\eta}{q}}\subseteq L^{p_0}(\M).
$$
We express the norm as
$$
 \|i_{p}^{\eta}(x)\|_{p}^{\eta}=
 \|D^{\frac{\eta}{q}}xD^{\frac{1-\eta}{q}}\|_{p}^{\eta}=\|x\|_{p},\quad\forall
 x\in L^{p}(\mathcal{M}).
 $$
  It is clear that $L^{p_1}(\M)^\eta+ L^{p_0}(\M)=L^{p_0}(\M)$ with equivalent norm.  Using 
Theorem \ref{thm:Lp-isometric weak Lp}, we get that $L^{p_1}(\M)^\eta+ L^{p_0}(\M)$ is isomorphic to closed subspace of $L^{p_0,\8}_0(\N)$. Hence, by \cite[Proposition 7.5]{KMM}, $L^{p_1}(\M)^\eta+ L^{p_0}(\M)$ is analytically convex. Similar to \eqref{eq:equivalent-interpolation}, it is deduced that
\beq\label{eq:equivalent-interpolation-Lp}
(L^{p_1}(\M)^\eta,L^{p_0}(\M))_\theta=[L^{p_1}(\M)^\eta,L^{p_0}(\M)]_\theta
\eeq
with equivalent norms.

In accordance with \cite[theorem 4.1]{GYZ}, for  $0<p_0<p_1\le\8$, it is established that
\beq\label{eq:interpolation-Lp}
[L^{p_1}(\M)^\eta, L^{p_0}(\M)]_\theta=i_{p_\theta}^{\eta}(L^{p_\theta}(\M)),
\eeq
 where
$\frac{1}{p_\theta}=\frac{1-\theta}{p_0}+\frac{\theta}{p_1}$ (refer to \cite[Theorem 9.1]{Kos} for the case where $1\le p_0<p_1\le\8$).

Let $H^{p_1}(\A)^\eta= i_{p_1}^{\eta}(H^{p_1}(\A))=D^{\frac{\eta}{q}}H^{p_1}(\A)D^{\frac{1-\eta}{q}}$ where $\frac{1}{p_1}+\frac{1}{q}=\frac{1}{p_0}$. Then $H^{p_1}(\A)^\eta\subset H^{p_0}(\A)$  and
$(H^{p_1}(\A)^\eta, H^{p_0}(\mathcal{A}))$ is compatible. Since $H^{p_1}(\A)^\eta+ H^{p_0}(\mathcal{A})=H^{p_0}(\mathcal{A})$ with equivalent norm and $H^{p_0}(\mathcal{A}))$ is closed subspace of $L^{p_0}(\M)$,   using  \cite[Proposition 7.5]{KMM}, similar to \eqref{eq:densety-analytic function space}, we obtain  that 
\beq\label{eq:densety-analytic function space-Hp}
 A \F(H^{p_1}(\A)^\eta, H^{p_0}(\mathcal{A}))\quad \mbox{dense in}\quad \F(H^{p_1}(\A)^\eta, H^{p_0}(\mathcal{A})).
\eeq

The next result is known. It follows from Pisier/Xu's theorem (see \cite{PX} or \cite[Theorem 1.1]{BO})
\begin{proposition} \label{lem:pisier-xu} Let $\M$ be a  finite von Neumann algebra,  $\A$ be a  subdiagonal algebra of $\M$, $0<p_0,\ p_1\le\8$, $0<\theta<1$ and $\frac{1}{p_\theta}=\frac{1-\theta}{p_0}+\frac{\theta}{p_1}$. If
$f\in A\F(H^{p_1}(\A), H^{p_0}(\A))$ and
\be
M_k=\sup_{t\in\mathbb{R}}\|f(k+it)\|_{p_k}, \qquad k=0,1,
\ee
then $\|f(\theta)\|_{p_\theta}\le M_0^{1-\theta}M_1^{\theta}$.
\end{proposition}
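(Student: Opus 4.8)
The plan is to normalize, reduce to a bounded analytic family, and then split according to the size of the exponents, treating $p\ge 1$ by duality and bootstrapping to $p<1$ by factorization. First I would replace $f(z)$ by $f(z)\,M_0^{\,z-1}M_1^{-z}$; this multiplier is a zero-free entire scalar function, bounded on $\overline S$, so the new $f$ still lies in $A\F(H^{p_1}(\A),H^{p_0}(\A))$, and it suffices to show $\|f(\theta)\|_{p_\theta}\le 1$ under the assumption $\sup_t\|f(it)\|_{p_0}\le 1$ and $\sup_t\|f(1+it)\|_{p_1}\le 1$. Since elements of $A\F$ are finite sums $\sum_k\phi_k(z)x_k$ with $\phi_k\in A(S)$ and $x_k\in H^{p_1}(\A)\cap H^{p_0}(\A)$, and since $\A$ is dense in each $H^{q}(\A)$ in the finite (tracial) case, a routine approximation (using that $M_0,M_1$ and $\|f(\theta)\|_{p_\theta}$ vary continuously with $f$ in $A\F$) lets me assume $x_k\in\A$. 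Then $f(z)\in\A$ for every $z\in\overline S$, so $f$ is a bounded analytic $\A$-valued function and I may use the full multiplicative structure.

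For the range $1\le p_0,p_1\le\8$ I would run a Stein--Calder\'on duality argument. Because $H^{p_\theta}(\A)$ embeds isometrically into $L^{p_\theta}(\M)$ (the finite tracial specialization of Theorem \ref{thm:Lp-isometric weak Lp}), one has $\|f(\theta)\|_{p_\theta}=\sup|\tau(f(\theta)\,b)|$ over a dense set of $b$ with $\|b\|_{p_\theta'}\le 1$. To each such $b=v|b|$ I attach the analytic family $b_z=v\,|b|^{\,p_\theta'(\frac{1-z}{p_0'}+\frac{z}{p_1'})}$, which satisfies $b_\theta=b$, $\|b_{it}\|_{p_0'}\le 1$ and $\|b_{1+it}\|_{p_1'}\le 1$. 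Then $F(z)=\tau\big(f(z)\,b_z\big)$ is scalar, bounded and analytic on $S$, continuous on $\overline S$, and H\"older's inequality gives $|F(it)|\le 1$ and $|F(1+it)|\le 1$; the classical Hadamard three-lines lemma yields $|F(\theta)|=|\tau(f(\theta)b)|\le 1$, and taking the supremum over $b$ gives $\|f(\theta)\|_{p_\theta}\le 1$.

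To reach $0<p_0,p_1\le\8$ I would reduce to the previous case by the Riesz-type factorization for the finite subdiagonal algebra $\A$. Choosing suitable auxiliary exponents $\tilde p_0,\tilde p_1\ge 1$ and an exponent $s$ with $\frac1{p_j}=\frac1{\tilde p_j}+\frac1s$ (iterating the step if a single common $s$ is not available), the aim is to write $f(z)=w(z)\,\tilde f(z)$ with $\tilde f\in A\F(H^{\tilde p_1}(\A),H^{\tilde p_0}(\A))$ and $w(z)$ a $z$-dependent outer factor in $H^{s}(\A)$, arranged so that H\"older's inequality holds with equality on the two boundary lines. Normalizing $w$ so that $\|w(j+it)\|_s=1$ forces $\|\tilde f(j+it)\|_{\tilde p_j}\le 1$; applying the previous paragraph to $\tilde f$ gives $\|\tilde f(\theta)\|_{\tilde p_\theta}\le 1$, while the control of $\|w(\theta)\|_s\le 1$ comes from the fact that for an outer function the norm $\|w(z)\|_s$ is governed by its boundary modulus (Szeg\H{o}'s theorem, $\Delta(w(z))=\Delta(\E(w(z)))$ for the Fuglede--Kadison determinant). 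Recombining by H\"older at $z=\theta$ then yields $\|f(\theta)\|_{p_\theta}\le 1$.

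The \emph{main obstacle} is precisely the construction of the analytic family of outer factors $w(z)$: one must produce $w(z)$ that is analytic in $z$, outer, and whose boundary modulus matches that of $f$ so that the factorization is isometric on each line, all simultaneously, with the troublesome endpoint $p_j=\8$ requiring the factor to degenerate on the corresponding side. This is also where the quasi-Banach subtleties for $p<1$ enter, since the maximum modulus principle is not automatic; one must verify analytic convexity of the relevant sum space (as is done for the weak spaces via \cite[Proposition 7.5]{KMM} and \eqref{eq:densety-analytic function space-Hp}) so that the Hadamard three-lines step survives in the quasi-normed setting. Once the analytic outer factorization and the analytic convexity are in place, the estimate follows as above, this being the content of \cite{PX} and \cite[Theorem 1.1]{BO}.
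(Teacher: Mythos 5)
First, a point of reference: the paper offers no proof of this proposition at all --- it is introduced with ``The next result is known'' and attributed to Pisier--Xu \cite{PX} and to \cite[Theorem 1.1]{BO} --- so the only meaningful comparison is between your sketch and those sources. The first two-thirds of your plan are sound: the normalization $f\mapsto f\,M_0^{z-1}M_1^{-z}$, the reduction to coefficients in $\A$ (using that $\A$ is dense in each $H^q(\A)$ for a finite trace and that $\|\cdot\|_{p_\theta}\le\|\cdot\|_{\max(p_0,p_1)}$ controls the perturbation of $f(\theta)$), and the duality argument for $1\le p_0,p_1\le\8$ via the analytic family $b_z=v|b|^{\,p_\theta'((1-z)/p_0'+z/p_1')}$ and the scalar three-lines lemma applied to $\tau(f(z)b_z)$ are all standard and correct, modulo routine care at the endpoints $p_j\in\{1,\8\}$ and the analyticity of $z\mapsto|b|^{\alpha(z)}$ (restrict to a norming set of $b$ with $|b|$ invertible in $\M$).

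The genuine gap is the step you yourself flag: for $\min(p_0,p_1)<1$ you need an analytic family $w(z)$ of outer elements whose boundary moduli match those of $f$ so that H\"older is an equality on both lines. In the commutative setting this is the classical two-variable outer function, the exponential of a harmonic extension of $\log|f(j+it)|$; in the noncommutative setting the moduli $|f(j+it)|$ do not commute with one another, and no such ``operator-valued Szeg\H{o} function, analytic in the strip variable'' is constructed in your argument or available off the shelf. In addition, the exponent bookkeeping $1/p_j=1/\tilde p_j+1/s$ with a single $s$ already breaks for $(p_0,p_1)=(1/2,\8)$, where it forces $1/\tilde p_1<0$, so the ``iteration'' you invoke is itself nontrivial. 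Since you close the paragraph by citing \cite{PX} and \cite[Theorem 1.1]{BO} for precisely this step, the $p<1$ half of the proposition is not actually proved. Two honest ways out: quote the result as the paper does; or replace the outer-factor scheme for the \emph{forward} estimate by an argument based on the (log-)subharmonicity of $z\mapsto\tau(|f(z)|^p)$-type quantities for $\A$-valued analytic functions (in the spirit of \cite{X1}), reserving the Riesz/outer factorization for the \emph{reverse} inclusion --- which is exactly how factorization is deployed elsewhere in this paper, in Theorem~\ref{thm:reisz} and the proof of Theorem~\ref{thm:interpolationhp1}.
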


The following result is an $H^p$-space analogue of \cite[Proposition 3.6]{GYZ}.  The proof differs slightly from that of
\cite[Proposition 3.6]{GYZ}.
\begin{proposition} \label{pro:trueline} Let $0\leq\eta\leq1$, $0<p_0,\ p_1<\8$ and $0<\theta<1$. If
$f\in A\F(H^{p_1}(\A)^\eta, H^{p_0}(\A))$ and
\be
M_k=\sup_{t\in\mathbb{R}}\|f(k+it)\|_{p_k}, \qquad k=0,1,
\ee
then $\|f(\theta)\|_{p_\theta}\le M_0^{1-\theta}M_1^{\theta}$.
\end{proposition}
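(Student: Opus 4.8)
The plan is to transport the inequality to the finite tracial algebras supplied by Haagerup's reduction theorem, where the finite--algebra estimate Proposition~\ref{lem:pisier-xu} is available, and to use imaginary powers of the density to dispose of the spatial embedding $i_{p_1}^{\eta}$. Write $\frac1{p_1}+\frac1q=\frac1{p_0}$ and $\frac1{p_\theta}=\frac{1-\theta}{p_0}+\frac{\theta}{p_1}$, and note first \emph{why} the embedding cannot be discarded at the level of $\M$: by \eqref{eq:generalized singular-haagerup} every nonzero $x\in L^{p}(\M)$ has $\mu_t(x)=t^{-1/p}\|x\|_p$, so $L^{p_0}(\M)\cap L^{p_1}(\M)=\{0\}$ and the \emph{untwisted} couple $(H^{p_1}(\A),H^{p_0}(\A))$ is degenerate. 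It is precisely the inclusion $H^{p_1}(\A)^{\eta}\subset H^{p_0}(\A)$ that makes $(H^{p_1}(\A)^{\eta},H^{p_0}(\A))$ a genuine couple, and the untwisted estimate only becomes meaningful after passing to a finite algebra, where it is Proposition~\ref{lem:pisier-xu}.

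First I would reduce to a finite algebra. Since $f\in A\F(H^{p_1}(\A)^{\eta},H^{p_0}(\A))$ is a finite sum $\sum_k\phi_k(z)a_k$ with $a_k\in H^{p_1}(\A)^{\eta}=D^{\eta/q}H^{p_1}(\A)D^{(1-\eta)/q}$, I would write $a_k=D^{\eta/q}b_k D^{(1-\eta)/q}$ with $b_k\in H^{p_1}(\A)$, regard $b_k\in H^{p_1}(\widehat{\A})$ under the natural isometric inclusion, and use Theorem~\ref{thm:denseHp}(2) to approximate the finitely many $b_k$ by elements of $\bigcup_n H^{p_1}(\A_n)$. This produces, for each large $n$, a function $f_n$ with values in the finite subdiagonal space $H^{p_1}(\A_n)^{\eta}$, boundary suprema at most $M_0+\varepsilon$ and $M_1+\varepsilon$, and $f_n(\theta)\to f(\theta)$ in $L^{p_\theta}$; here I must reconcile the density $D$ of $\widehat{\varphi}$ on $\R$ with the density $D_n$ on the finite algebra $\R_n$, which is where the compatibility of $D$ with the filtration $\{\R_n\}$ built into the construction of \cite{H2,X} enters.

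Working now inside the finite algebra, where $D_n$ is bounded with bounded inverse, I would remove the embedding by the substitution
\[
\Psi_n(z)=D_n^{-\frac{\eta z}{q}}\,f_n(z)\,D_n^{-\frac{(1-\eta)z}{q}},\qquad z\in\overline S .
\]
On the two bounding lines the real parts of the exponents vanish, so $\Psi_n(it)$ and $\Psi_n(1+it)$ are obtained from $f_n(it)\in H^{p_0}(\A_n)$ and from the untwisted preimage in $H^{p_1}(\A_n)$ by multiplying with the unitaries $D_n^{is}$; since $D_n^{is}$ is unitary and $\sigma^{\varphi}_s(\A)=\A$ by \eqref{maximal}, the memberships and norms are preserved, so $\sup_t\|\Psi_n(it)\|_{p_0}\le M_0+\varepsilon$ and $\sup_t\|\Psi_n(1+it)\|_{p_1}\le M_1+\varepsilon$. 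A direct computation gives $\Psi_n(\theta)=D_n^{-\theta\eta/q}f_n(\theta)D_n^{-\theta(1-\eta)/q}=\big(i_{p_\theta}^{\eta}\big)^{-1}\!\big(f_n(\theta)\big)$, so $\|\Psi_n(\theta)\|_{p_\theta}=\|f_n(\theta)\|_{p_\theta}$. As $\Psi_n$ now takes values in the \emph{untwisted}, and in the finite setting nondegenerate, couple $(H^{p_1}(\A_n),H^{p_0}(\A_n))$, Proposition~\ref{lem:pisier-xu} yields $\|\Psi_n(\theta)\|_{p_\theta}\le(M_0+\varepsilon)^{1-\theta}(M_1+\varepsilon)^{\theta}$; letting $n\to\infty$ and then $\varepsilon\to0$ gives the assertion.

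The hard part is the reduction step for $0<p<1$. For $p\ge1$ one could simply compose with the conditional expectations of Theorem~\ref{thm:denseHp}(4), but for $p<1$ these are not bounded, so the passage to $\A_n$ has to be a genuine approximation that controls the three data $\sup_t\|\Psi_n(it)\|_{p_0}$, $\sup_t\|\Psi_n(1+it)\|_{p_1}$ and $\|f_n(\theta)-f(\theta)\|_{p_\theta}$ simultaneously and uniformly in $t$, together with the matching of $D$ and $D_n$. A secondary technical point is that $\Psi_n$ has operator-valued coefficients and therefore lies in $\F$ rather than in $A\F$; to apply Proposition~\ref{lem:pisier-xu} one first extends its conclusion from $A\F$ to all of $\F(H^{p_1}(\A_n),H^{p_0}(\A_n))$, which is legitimate because that couple is analytically convex and $A\F$ is dense in $\F$, exactly as in \eqref{eq:densety-analytic function space-Hp}.
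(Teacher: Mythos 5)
Your overall strategy --- embed $H^p(\A)$ isometrically into $H^p(\widehat{\A}\,)$, approximate by the finite levels $\A_n$ of Haagerup's reduction, and invoke Proposition \ref{lem:pisier-xu} there --- is exactly the paper's. But the device you use to pass to the finite level, the conjugation $\Psi_n(z)=D_n^{-\eta z/q}f_n(z)D_n^{-(1-\eta)z/q}$, has a genuine gap. The embedding $i_{p_1}^{\eta}$ is built from the \emph{global} Haagerup density $D$, an unbounded positive operator affiliated with the crossed product of $\R$ by $\mathbb{R}$, whereas the bounded invertible density $D_n$ of the finite algebra $\R_n$ is a different operator living in a different place; $D_n^{-\eta z/q}D^{\eta/q}$ does not collapse to anything useful. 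Consequently neither the identity $\Psi_n(\theta)=(i_{p_\theta}^{\eta})^{-1}(f_n(\theta))$ nor the claimed boundary memberships $\Psi_n(it)\in H^{p_0}(\A_n)$ and $\Psi_n(1+it)\in H^{p_1}(\A_n)$ hold as written; the ``compatibility of $D$ with the filtration'' that you defer to is precisely the missing content. Even in a purely tracial repair, $D_n^{ia}xD_n^{ib}$ with $a+b\neq0$ is not a modular automorphism applied to $x$, so invariance of $H^p(\A_n)$ under your boundary twist requires $D_n^{is}\in\mathcal{D}_n$, a fact you neither state nor verify; and $\Psi_n$ leaves $A\F$, forcing you to extend Proposition \ref{lem:pisier-xu} from $A\F$ to all of $\F$ with the sharp constant $M_0^{1-\theta}M_1^{\theta}$, an extension the paper never needs.

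The paper's proof sidesteps all of this by never untwisting. After subtracting the constant $f(\theta)-i_{p_1}^{\eta}(x)$ with $x$ in a finite level (so that the value at $\theta$ is controlled in the $p_\theta$-sense), the function is a finite sum $\sum_k g_k(z)\,i_{p_1}^{\eta}(y_k)$; one approximates each $y_k$ by $a_k\in H^{p_1}(\A_{N'})$ and splits it into $h(z)=\sum_k g_k(z)\,i_{p_1}^{\eta}(a_k)$ plus an error $l$. The error is controlled on both boundary lines and at $\theta$ by the single domination $\|i_{p_1}^{\eta}(\cdot)\|_{p_s}\le\|\cdot\|_{p_1}$, and $h$ is still a finite sum of scalar functions in $A(S)$ times fixed finite-level operators, so Proposition \ref{lem:pisier-xu} applies to it directly through the isometric identification of Theorem \ref{thm:denseHp}(3). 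Note also that for $0<p_0<1$ the triangle inequality fails and the paper runs the whole error estimate separately with $\|\cdot\|_{p_0}^{p_0}$ and $\|\cdot\|_{p_\theta}^{p_\theta}$; your write-up adds norms of errors freely and would need the same case distinction. Your opening observation that the untwisted couple over $\M$ is degenerate while the finite-level one is not is correct and is indeed why the reduction is needed, but the reduction has to be carried out as a coefficientwise approximation, not as an analytic conjugation by imaginary powers of the density.
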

\begin{proof} Let $\widehat{\A}$ be as in Theorem \ref{thm:denseHp}. By \cite[Proposition 2.3]{BR1}, $H^{p}(\A)$ coincides isometrically with a subspace of $H^{p}(\widehat{\A})$  for any $0<p<\8$. Hence, it is sufficient to prove that if
$$f\in A\F( H^{p_1}(\widehat{\A})^\eta,H^{p_0}(\widehat{\A}))$$
with $\sup_{t\in\mathbb{R}}\|f(k+it)\|_{p_k}\le 1 \;(k=0,1)$,
 then $\|f(\theta)\|_{p_\theta}\le 1$.

 Let
$$
f(z)=\sum_{k=1}^n f_k(z)x_k,\qquad f_k\in A(S),\qquad x_k\in H^{p_1}(\widehat{\A})^\eta.
$$
By Theorem \ref{thm:denseHp}, $\{H^{p_1}(\A_n)\}_{n\ge1}$ is a  increasing sequence of subspaces of $H^{p_1}(\widehat{\A})$ and $\bigcup_{n\ge1}H^{p_1}(\A_n)$  is dense in  $H^{p_1}(\widehat{\A})$. Hence, for $\varepsilon > 0$, there exist an natural number  $N$ and  an operator $x\in H^{p_1}(\A_N)$ such that
$\|f(\theta)-x\|_{p_1}<\frac{\varepsilon}{2}$.  We consider
 \be
g(z)=f(z)-(f(\theta)-i_{p_1}^{\eta}(x))=\sum_{k=1}^n g_k(z)i_{p_1}^{\eta}(y_k),
\ee where $g_k\in A(S)$ and $y_k\in H^{p_1}(\widehat{\A})$.
Therefore, there exist an natural number  $N'$ and  a sequence
$\{a_k\}\subset H^{p_1}(\A_{N'})$
such that for each $k$
\be
\|y_k-a_k\|_{p_1}<\frac{\varepsilon}{2^{k+1}C_k},
\ee
where $C_k=\sup_{z\in\overline{S}}|g_k(z)|<\8$. Set
\be
h(z)=\sum_{k=1}^n g_k(z)a_k,
\qquad
l(z)=\sum_{k=1}^n g_k(z)(y_k-a_k).
\ee
If $p_0\ge1$ (respectively, $0<p_0<1$), then for any $z\in\overline{S}$,
\be\begin{array}{rl}
\|l(z)\|_{p_0}&\le \sum_{k=1}^n|g_k(z)|\|i_{p_1}^{\eta}(y_k-a_k)\|_{p_0}\\
&\le \sum_{k=1}^nC_k\|y_k-a_k\|_{p_1}<\sum_{k=1}^n\frac{\varepsilon}{2^{k+1}}
\end{array}
\ee
\be
(\mbox{respectively,}\;\|l(z)\|_{p_0}^{p_0}\le \sum_{k=1}^n\frac{\varepsilon^{p_0}}{2^{p_0(k+1)}})
\ee
and
\be\begin{array}{rl}
\|h(it)\|_{p_0}&\le \|g(it)-l(it)\|_{p_0}\le \|g(it)\|_{p_0}+\|l(it)\|_{p_0}\\
&=\|f(it)-(f(\theta)-i_{p_1}^{\eta}(x))\|_{p_0}+\|l(it)\|_{p_0}\\ &\le\|f(it)\|_{p_0}+\frac{\varepsilon}{2}+\|l(it)\|_{p_0} \\
&\le1+\frac{\varepsilon}{2}+\sum_{k=1}^n\frac{\varepsilon}{2^{k+1}}=1+O(\varepsilon)
\end{array}
\ee
\be
(\mbox{respectively,}\;\|h(it)\|_{p_0}^{p_0}\le1+\frac{\varepsilon^{p_0}}{2^{p_0}}+ \sum_{k=1}^n\frac{\varepsilon^{p_0}}{2^{p_0(k+1)}})\le 1+O(\varepsilon^{p_0})).
\ee
whence $\|h(it)\|_{p_0}\le1+O(\varepsilon)$. Similarly, we get that
 $$
 \|h(1+it)\|_{p_1}\le1+ O(\varepsilon).
 $$
 Using Proposition \ref{lem:pisier-xu}, we deduce that
 $\|h(\theta)\|_{p_{\theta}}\le1+O(\varepsilon)$. Hence, if $p_0\ge1$ (respectively, $0<p_0<1$), then
 \be\begin{array}{rl}
\|f(\theta)\|_{p_\theta}&=\|g(\theta)+(f(\theta)-i_{p_1}^{\eta}(x))\|_{p_\theta}\le\|g(\theta)\|_{p_\theta}+\|f(\theta)-x\|_{p_1}\\
&\le \|h(\theta)+l(\theta)\|_{p_\theta}+\frac{\varepsilon}{2}\le \|h(\theta)\|_{p_\theta}+\|l(\theta)\|_{p_\theta}+\frac{\varepsilon}{2}\\
 &\le\|h(\theta)\|_{p_\theta}+\sum_{k=1}^n\frac{\varepsilon}{2^{k+1}}+\frac{\varepsilon}{2}=1+O(\varepsilon)
\end{array}
\ee
\be
(\mbox{respectively,}\;\|f(\theta)\|_{p_\theta}^{p_\theta}\le 1+O(\varepsilon^{p_\theta})).
\ee
Letting $\varepsilon\rightarrow0$, we obtain the desired result.

\end{proof}

%Let $\mathcal{M}_{a}$ be the family of analytic vectors in %$\mathcal{M}.$
%Recall that $x\in\mathcal{M}_{a}$ if only if the function $t\mapsto
%\sigma_{t}(x)$ extends to an analytic function from $\mathbb{C}$ to
%$\mathcal{M}$.  $\mathcal{M}_{a}$ is a w*-dense $\ast$-subalgebra of
%$\mathcal{M}$ (cf. \cite{PT}).

%Recall that if $\A_{a}$ and $\mathcal{D}_a$ are respectively the %families of analytic
%vectors in $\A$ and $\mathcal{D}$, then
% $\mathcal{A}_{a}$ is a w*-dense in $\A$ and $\mathcal{D}_{a}$ is a w*-dense in $\mathcal{D}$
%(see the proof of \cite[Theorem 2.5]{J1} or \cite[Lemma 2.3]{BR}).
%Using \cite[Lemma 2.2]{Ju}, we obtain that for any $0\le\eta\le1$,
%\beq\label{eq:analytic-dense}
%[D^{\frac{\eta}{p}}\A_{a}D^{\frac{1-\eta}{p}}]_p=H^{p}(\A)\qquad\mbox{and}\qquad   [D^{\frac{\eta}{p}}\mathcal{D}_{a}D^{\frac{1-\eta}{p}}]_p=L^{p}(\mathcal{D}).
%\eeq

%\begin{lemma}\label{lem:hp^2} Let $0<p_0<p_1\le\8$ and $0<\theta<1$.  Then
%\be
%(H^{2p_1}(\A)^\eta,H^{2p_0}(\A))_\theta^2\subset (H^{p_1}(\A)^\eta,H^{p_0}(\A))_\theta.
%\ee
%\end{lemma}

\begin{theorem} \label{thm:interpolationhp-hat} Let $0<p_0<p_1\le\8$ and $0<\theta<1$. Then
\be
(H^{p_1}(\widehat{\A}\:)^\eta,H^{p_0}(\widehat{\A}\:))_\theta=i_{p_\theta}^{\eta}( H^{p_\theta}(\widehat{\A}\:)),
\ee
 where $\frac{1}{p_\theta}=\frac{1-\theta}{p_0}+\frac{\theta}{p_1}$.
\end{theorem}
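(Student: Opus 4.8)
The plan is to establish the two continuous inclusions
$(H^{p_1}(\widehat{\A})^\eta,H^{p_0}(\widehat{\A}))_\theta\subseteq i_{p_\theta}^{\eta}(H^{p_\theta}(\widehat{\A}))$
and
$i_{p_\theta}^{\eta}(H^{p_\theta}(\widehat{\A}))\subseteq(H^{p_1}(\widehat{\A})^\eta,H^{p_0}(\widehat{\A}))_\theta$
separately, obtaining the asserted equality with equivalent quasi-norms; throughout write $\tfrac1q=\tfrac1{p_0}-\tfrac1{p_1}$ and $\tfrac1{q'}=\tfrac1{p_0}-\tfrac1{p_\theta}$, so that $\tfrac1{q'}=\tfrac{\theta}{q}$. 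For the first inclusion I would take $x$ in the interpolation space, choose $f\in\F(H^{p_1}(\widehat{\A})^\eta,H^{p_0}(\widehat{\A}))$ with $f(\theta)=x$, and approximate $f$ in the $\F$-quasi-norm by functions $g\in A\F(H^{p_1}(\widehat{\A})^\eta,H^{p_0}(\widehat{\A}))$, which is legitimate by \eqref{eq:densety-analytic function space-Hp}. Applying Proposition~\ref{pro:trueline} with $\widehat{\A}$ in place of $\A$ to each $g$ yields $\|g(\theta)\|_{p_\theta}\le\|g\|_{A\F}$. Since every such $g$ satisfies $g(\theta)\in H^{p_1}(\widehat{\A})^\eta$, and since the inclusion $H^{p_1}(\widehat{\A})^\eta\subseteq i_{p_\theta}^{\eta}(H^{p_\theta}(\widehat{\A}))$ holds (rewrite an element $D^{\eta/q}aD^{1/p_1+(1-\eta)/q}$, $a\in\widehat{\A}$, in the form $D^{\eta/q'}(\,\cdot\,)D^{(1-\eta)/q'}$ whose middle factor has total $D$-exponent $1/p_\theta$ and invoke Proposition~\ref{lem:hp}), the limit $x=f(\theta)$ lands in the closed subspace $i_{p_\theta}^{\eta}(H^{p_\theta}(\widehat{\A}))$ with $\|x\|_{p_\theta}\le\|x\|_\theta$.

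The reverse inclusion is the heart of the matter. Given $x\in H^{p_\theta}(\widehat{\A})$ I must produce $f\in\F(H^{p_1}(\widehat{\A})^\eta,H^{p_0}(\widehat{\A}))$ with $f(\theta)=i_{p_\theta}^{\eta}(x)$ and $\|f\|_{\F}\le C\|x\|_{p_\theta}$ for a constant $C$ depending only on $p_0,p_1,\theta$. The naive candidate for $x=aD^{1/p_\theta}$ with $a\in\widehat{\A}$, namely $f(z)=D^{\eta z/q}aD^{1/p_0-\eta z/q}$, does satisfy $f(\theta)=i_{p_\theta}^{\eta}(x)$ and takes values in the correct Hardy spaces on both boundary lines: the maximality relation \eqref{maximal} gives $D^{is}aD^{-is}=\sigma^{\widehat\varphi}_{s}(a)\in\widehat{\A}$, so $f(it)\in\widehat{\A}D^{1/p_0}\subseteq H^{p_0}(\widehat{\A})$ and $f(1+it)\in H^{p_1}(\widehat{\A})^\eta$. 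However its boundary norms equal $\|aD^{1/p_0}\|_{p_0}$ and $\|aD^{1/p_1}\|_{p_1}$, which are not controlled by $\|aD^{1/p_\theta}\|_{p_\theta}$; this is the genuine obstacle, and it forces a norm-balancing factorization of $x$.

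To supply that factorization I would descend to the finite level. By Theorem~\ref{thm:denseHp} the union $\bigcup_{n\ge1}H^{p_\theta}(\A_n)$ is dense in $H^{p_\theta}(\widehat{\A})$, each $H^{p}(\A_n)$ is isometric to the ordinary noncommutative $H^p$ of the finite subdiagonal algebra $\A_n$, and $H^{p}(\A_n)\subseteq H^{p}(\widehat{\A})$. On a finite subdiagonal algebra the Riesz--Szeg\H{o} outer factorization is available for every $0<p<\infty$, so for $x_n\in H^{p_\theta}(\A_n)$ the finite-algebra interpolation theorem — the reverse half complementing Proposition~\ref{lem:pisier-xu}, whose interpolating family is built from the outer factorization of $x_n$ rather than from the unbalanced $D$-power family above — furnishes $f_n\in A\F(H^{p_1}(\A_n)^\eta,H^{p_0}(\A_n))\subseteq A\F(H^{p_1}(\widehat{\A})^\eta,H^{p_0}(\widehat{\A}))$ with $f_n(\theta)=i_{p_\theta}^{\eta}(x_n)$ and $\|f_n\|_{A\F}\le C\|x_n\|_{p_\theta}$, the constant $C$ uniform in $n$.

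Finally I would choose $x_n\to x$ in $H^{p_\theta}(\widehat{\A})$ and pass to the limit, using the uniform bound, the continuity of $i_{p_\theta}^{\eta}$, and the completeness of the quasi-Banach interpolation space to conclude $i_{p_\theta}^{\eta}(x)\in(H^{p_1}(\widehat{\A})^\eta,H^{p_0}(\widehat{\A}))_\theta$ with $\|i_{p_\theta}^{\eta}(x)\|_\theta\le C\|x\|_{p_\theta}$; here the telescoping sum $\sum(f_{n+1}-f_n)$ must be handled through the Aoki--Rolewicz inequality since we work in the quasi-Banach rather than the Banach category. Combining this with the first inclusion yields the theorem. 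I expect the two delicate points to be securing the uniform-in-$n$ constant in the finite-algebra construction and controlling the limit quasi-normwise; the analytic-family construction on the boundary lines itself is routine once the factorization is in hand.
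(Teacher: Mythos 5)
Your proof follows essentially the same route as the paper's: the first inclusion via the boundary estimate of Proposition~\ref{pro:trueline} together with the density \eqref{eq:densety-analytic function space-Hp}, and the reverse inclusion by descending through Haagerup reduction (Theorem~\ref{thm:denseHp}) to the finite subdiagonal algebras $\A_n$, invoking the Pisier--Xu interpolation theorem there with a constant uniform in $n$, and passing to the limit in the quasi-Banach interpolation space. The only point the paper treats that you omit is the endpoint $p_1=\infty$, which it handles separately by Wolff's interpolation lemma combined with the finite-$p_1$ case.
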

\begin{proof} The result for the case $ p_0\ge1$ has been previously established in  \cite[Lemma 3.2]{BR1}. We now turn our attention to the case when  $0<p_0 < 1$. First, we assume that $ p_1<\8$. Our objective is to demonstrate that
\be
[H^{p_1}(\widehat{\A}\:)^\eta,H^{p_0}(\widehat{\A}\:)]_\theta\subset i_{p_\theta}^{\eta}( H^{p_\theta}(\widehat{\A}\:)).
\ee
To substantiate this inclusion, it suffices to verify that
\beq\label{eq:interpolation1}
\|i_{p_\theta}^{\eta}(x)\|_{p_\theta}\le \|i_{p_\theta}^{\eta}(x)\|_\theta\qquad \forall x\in H^{p_1}(\widehat{\A}\:)^\eta.
\eeq
Let $f\in A\F(H^{p_1}(\widehat{\A}\:)^\eta,H^{p_0}(\widehat{\A}\:))$ such that
$f(\theta) = x$ and
\be
\|f\|_{A\F(H^{p_1}(\widehat{\A}\:)^\eta,H^{p_0}(\widehat{\A}\:))}\le1.
\ee
From the proof of Proposition \ref{pro:trueline}, we have that
$$
\|i_{p_\theta}^{\eta}(x)\|_{p_\theta}=\|f(\theta)\|_{p_\theta}\le1,
$$
confirming the validity of \eqref{eq:interpolation1}.

Next, we aim to establish that
\be
 i_{p_\theta}^{\eta}( H^{p_\theta}(\widehat{\A}\:))\subset[H^{p_1}(\widehat{\A}\:)^\eta,H^{p_0}(\widehat{\A}\:)]_\theta.
\ee
For this purpose, it is necessary to find a constant $C>0$ for which every  $x\in H^{p_\theta}(\widehat{\A}\:)$ satisfies
\be
\|i_{p_\theta}^{\eta}(x)\|_\theta\le C\|i_{p_\theta}^{\eta}(x)\|_{p_\theta}.
\ee
Consider $x\in H^{p_\theta}(\widehat{\A}\:)$ and a sequence $\{\mathcal{A}_n\}_{n\ge1}$ as described in \ref{thm:denseHp}. Given that  $\A_n$ is a subdiagonal subalgebra of the finite von Neumann algebra $\R_n$, it follows that
$$
 [H^{p_1}(\A_n),
H^{p_0}(\A_n)]_{\theta}=H^{p_\theta}(\A_n)
 $$
with equivalent norms for all  $ n\in \mathbb{N}$ (refer to  the remark following Lemma 8.5 in \cite{PX} or Theorem 1.1 in \cite{BO}). By virtue of (1) of Theorem \ref{thm:denseHp}, we assert that
 $$
 H^{p_\theta}(\A_n)=(H^{p_1}(\A_n),
H^{p_0}(\A_n))_{\theta}\subset(H^{p_1}(\widehat{\A}\:)^\eta, H^{p_0}(\widehat{\A}\:))_\theta
 $$
 and
 \beq\label{eq:inequality2}
  \|a\|_{\theta}\le C\|a\|_{p_\theta},\qquad \forall a\in H^{p_\theta}(\A_n)
 \eeq
with $C$  depends only on  $\theta$.

Applying (2) of Theorem \ref{thm:denseHp}, we acquire a convergent sequence $\{a_n\}_{n\ge1}$ within $\bigcup_{n\ge1}H^{p_\theta}(\A_n)$ such that
 $$
\lim_{n\rightarrow\8}\| a_n -x\|_{p_\theta}=0.
 $$
The monotonicity of $\{H^{p_\theta}(\mathcal{A}_n)\}_{n\ge1}$ coupled with \eqref{eq:inequality2} allows us to deduce that $\{a_n\}_{n\ge1}$ forms a Cauchy sequence in
$(H^{p_1}(\widehat{\A}\:)^{\eta}, H^{p_0}(\widehat{\A}\:))_{\theta}$.
Therefore, there exists $y$ within $(H^{p_1}(\widehat{\A}\:)^{\eta}, H^{p_0}(\widehat{\A}\:))_{\theta}$ satisfying
$$
\lim_{n\rightarrow\8}\|a_n-y\|_{ \theta}=0.
$$
Invoking \eqref{eq:interpolation1}, we conclude that $\lim_{n\rightarrow\8}\|a_n-y\|_{p_\theta}=0$, and hence $y=x$. This establishes that
$$
  \|x\|_{\theta}\le C\|x\|_{p_\theta},\qquad \forall x\in H^{p_\theta}(\widehat{\A}\:).
$$
As a result, we affirm that $ (H^{p_1}(\widehat{\A}\:)^{\eta}, H^{p_0}(\widehat{\A}\:))_{\theta}= i_{p_\theta}^{\eta}(H^{p_\theta}(\widehat{\A}\:))$.

In the case $p_1 = \8$, we use \cite[Lemma 1]{W} (which is also true for quasi Banach spaces) in combination with the result from the first case and \cite[Lemma 3.2]{BR1} in a similar way to obtain the desired result.

\end{proof}

Let $\mathcal{A}$ be a maximal subdiagonal subalgebra in $\mathcal{M}$ with
respect to $\mathcal{E}$. Let $x\in\mathcal{M}$. If the function $t\mapsto
\sigma_{t}(x)$ extends to an analytic function from $\mathbb{C}$ to
$\mathcal{M}$, then we call $x$ an analytic vector in $\mathcal{M}$.
We denote by  $\mathcal{M}_{a}$  the family of all analytic vectors in $\mathcal{M}$. Then $\mathcal{M}_{a}$ is a w*-dense $\ast$-subalgebra of
$\mathcal{M}$ (cf. \cite{PT}). The next result is proved in \cite[Lemma 2.3]{BR} for the case $1\le p<\8$ (see the proof of Theorem 2.5 in \cite{J1}), we extend it to the case $0<p<1$.

\begin{lemma}\label{lem:analytic}
Let $\mathcal{A}_{a}$ be respectively the families of all analytic
vectors in $\mathcal{A}$.  If $0< p<1$ and $0\le\eta\le1$, then
$D^{\frac{\eta}{p}}\mathcal{A}_{a}D^{\frac{1-\eta}{p}}$ is dense in $H^{p}(\mathcal{A})$.
\end{lemma}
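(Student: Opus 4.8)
The plan is to combine Proposition~\ref{lem:hp} with a Gaussian regularization of elements of $\A$ along the modular flow, and to deal with the quasi-Banach range $0<p<1$ by a H\"older factorization that pushes the delicate estimate into $L^1$. By Proposition~\ref{lem:hp} (applied with $1-\eta$ in place of $\eta$) we have $H^p(\A)=[D^{\eta/p}\A D^{(1-\eta)/p}]_p$, so since $D^{\eta/p}\A_a D^{(1-\eta)/p}$ is a linear subspace it suffices to show that every generator $\xi:=D^{\eta/p}aD^{(1-\eta)/p}$ with $a\in\A$ lies in the $L^p$-closure of $D^{\eta/p}\A_a D^{(1-\eta)/p}$. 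For $a\in\A$ I set
\[
a_n=\sqrt{n/\pi}\int_{\real}e^{-nt^2}\sigma_{t}^{\varphi}(a)\,dt .
\]
Each $a_n$ is an (entire) analytic vector for $\{\sigma_t^\varphi\}$, and by the maximality identity \eqref{maximal} we have $\sigma_t^\varphi(a)\in\A$ for all $t$; since $\A$ is w*-closed and the integrand is bounded and w*-continuous, the integral stays in $\A$, so $a_n\in\A\cap\M_a=\A_a$.

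Next I would split the exponents. Choose $\alpha_1,\alpha_2,\beta_1,\beta_2\ge0$ with $\alpha_1+\alpha_2=\eta/p$, $\beta_1+\beta_2=(1-\eta)/p$ and $\alpha_2+\beta_2=1$; such a choice exists precisely because $1/p\ge1$ (the admissible interval for $\alpha_2$, namely $[\max\{0,1-(1-\eta)/p\},\min\{1,\eta/p\}]$, is nonempty exactly when $p\le1$). Put $\zeta:=D^{\alpha_2}aD^{\beta_2}\in L^1(\M)$. The crucial point is that $D^{it}$ commutes with every power of $D$ and implements the modular flow, $\sigma_t^\varphi(x)=D^{it}xD^{-it}$, so that $D^{\alpha_2}\sigma_t^\varphi(a)D^{\beta_2}=D^{it}\zeta D^{-it}$. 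Pulling the fixed factors $D^{\alpha_1},D^{\beta_1}$ and the integral through the multiplication then gives
\[
D^{\eta/p}a_nD^{(1-\eta)/p}=D^{\alpha_1}\zeta_nD^{\beta_1},\qquad \zeta_n:=D^{\alpha_2}a_nD^{\beta_2}=\sqrt{n/\pi}\int_{\real}e^{-nt^2}\,D^{it}\zeta D^{-it}\,dt .
\]

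Finally, since $\alpha_2+\beta_2=1$ the elements $\zeta_n,\zeta$ live in the genuine Banach space $L^1(\M)$, where the ordinary triangle inequality is available. Writing $\zeta_n-\zeta=\sqrt{n/\pi}\int_{\real} e^{-nt^2}(D^{it}\zeta D^{-it}-\zeta)\,dt$ and using the strong continuity of $\{\sigma_t^\varphi\}$ on $L^1(\M)$ (so that $t\mapsto D^{it}\zeta D^{-it}$ is norm-continuous at $0$) together with the concentration of the Gaussian kernel, a standard approximate-identity estimate yields $\|\zeta_n-\zeta\|_1\to0$. Passing back to $L^p$ by the H\"older inequality for Haagerup $L^p$-spaces,
\[
\|D^{\eta/p}a_nD^{(1-\eta)/p}-\xi\|_p=\|D^{\alpha_1}(\zeta_n-\zeta)D^{\beta_1}\|_p\le\|D^{\alpha_1}\|_{1/\alpha_1}\,\|\zeta_n-\zeta\|_1\,\|D^{\beta_1}\|_{1/\beta_1}=\|\zeta_n-\zeta\|_1\longrightarrow0,
\]
where $\|D^{\alpha_1}\|_{1/\alpha_1}=(\tr D)^{\alpha_1}=1$ and likewise for $\beta_1$ (with the convention $D^0=1$ when $\alpha_1$ or $\beta_1$ vanishes). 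This places $\xi$ in the $L^p$-closure of $D^{\eta/p}\A_aD^{(1-\eta)/p}$ and proves the lemma. I expect the main obstacle to be exactly the failure of the triangle inequality for the $L^p$ quasi-norm when $0<p<1$, which blocks any direct estimate of the regularizing Bochner integral in $L^p$; the device that circumvents it is the commutation of $D^{it}$ with powers of $D$, which lets the entire regularization be rewritten with an $L^1$-valued integrand, so that the only analytic estimate is performed in a Banach space and the return to $L^p$ is a norm-one H\"older multiplication by fixed elements of $L^r(\mathcal{D})$.
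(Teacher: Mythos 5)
Your argument is correct, and its engine is the same one the paper uses: since $0<p<1$, write $\tfrac1p=\tfrac1r+1$ and factor the target element as (power of $D$)$\cdot$($L^1$-element)$\cdot$(power of $D$), so that all convergence is established in the Banach space $L^1(\M)$ and transported to $L^p(\M)$ by the norm-one H\"older multiplications $\|D^{\alpha_1}\|_{1/\alpha_1}=\|D^{\beta_1}\|_{1/\beta_1}=1$. The difference is one of packaging. The paper's proof is two lines: it invokes \cite[Lemma 2.3]{BR}, which already asserts $[D^{\eta}\A_a D^{1-\eta}]_1=H^1(\A)$, and then records the chain $[D^{\frac{\eta}{p}}\A_a D^{\frac{1-\eta}{p}}]_p=[D^{\frac{\eta}{r}}[D^{\eta}\A_a D^{1-\eta}]_1 D^{\frac{1-\eta}{r}}]_p=[D^{\frac{\eta}{r}}H^1(\A)D^{\frac{1-\eta}{r}}]_p=H^p(\A)$, the middle equality being exactly your H\"older continuity of two-sided multiplication. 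You instead re-derive the $L^1$ ingredient from scratch: Gaussian regularization $a_n=\sqrt{n/\pi}\int e^{-nt^2}\sigma_t^{\varphi}(a)\,dt$ (which lands in $\A_a$ by the maximality identity \eqref{maximal} and w*-closedness), the commutation $D^{\alpha_2}\sigma_t^{\varphi}(a)D^{\beta_2}=D^{it}\zeta D^{-it}$ to turn the smoothing into an $L^1$-valued Bochner integral, and an approximate-identity estimate using point-norm continuity of the extended modular group on $L^1(\M)\cong\M_*$. What your route buys is self-containedness and an explicit approximating sequence; what it costs is that you silently reprove \cite[Lemma 2.3]{BR} and must justify a few routine interchanges (that the w*-integral defining $a_n$ agrees after multiplication with the $L^1$-Bochner integral, and the continuity of $t\mapsto D^{\alpha_2}\sigma_t^{\varphi}(a)D^{\beta_2}$ in $\|\cdot\|_1$), all of which do go through. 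Your exponent bookkeeping ($\alpha_1+\beta_1=\tfrac1p-1=\tfrac1r$, nonempty admissible interval exactly when $p\le1$) is also consistent with the paper's choice of $r$.
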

\begin{proof}
 Choose $r>0$  such that $\frac{1}{r}+1=\frac{1}{p}$. By \cite[Lemma 2.3]{BR}, we get
 $$
 \begin{array}{rl}
 [D^{\frac{\eta}{p}}\mathcal{A}_{a}D^{\frac{1-\eta}{p}}]_p&=[D^{\frac{\eta}{r}}(D^{\eta}\mathcal{A}_{a}D^{1-\eta})D^{\frac{1-\eta}{r}}]_p \\
 &=[D^{\frac{\eta}{r}}[D^{\eta}\mathcal{A}_{a}D^{1-\eta}]_1D^{\frac{1-\eta}{r}}]_p\\
      & =[D^{\frac{\eta}{r}}H^{1}(\mathcal{A})D^{\frac{1-\eta}{r}}]_p=H^{p}(\mathcal{A}).
 \end{array}
 $$
\end{proof}

We define the complex interpolation spaces $H^{p_\theta}(\widehat{\A}\:)_{L}$ and $H^{p_\theta}(\widehat{\A}\:)_{R}$ as follows:
$$
H^{p_\theta}(\widehat{\A}\:)_{L}=(H^{p_1}(\widehat{\A}\:)^{0}, H^{p_0}(\widehat{\A}\:))_{\theta}$$
and
$$
H^{p_\theta}(\widehat{\A}\:)_{R}=(H^{p_1}(\widehat{\A}\:)^1,H^{p_0}(\widehat{\A}\:))_{\theta}.
$$
These are referred to as the left and the right  $H^{p_\theta}$-spaces with respect to $D$, respectively.  According to Theorem \ref{thm:interpolationhp-hat}, it is established that
$$
H^{p_\theta}(\widehat{\A}\:)_{L}=H^{p_\theta}(\widehat{\A}\:)D^{\frac{1}{r}},\qquad H^{p_\theta}(\widehat{\A}\:)_{R}=D^{\frac{1}{r}}H^{p_\theta}(\widehat{\A}\:),
$$
where $\frac{1}{r}+\frac{1}{p_\theta}=\frac{1}{p_0}$.
We present a Stein-Weiss type interpolation theorem as follows

\begin{theorem}  Let $0<p_0<p<\8$, $\frac{1}{p}+\frac{1}{q}=\frac{1}{p_0},\;0\le\eta\le1$. Then
$$
(H^{p}(\widehat{\A}\:)D^{\frac{1}{q}},D^{\frac{1}{q}}H^{p}(\widehat{\A}\:))_{\eta}=
D^{\frac{\eta}{q}}H^{p}(\widehat{\A}\:)D^{\frac{1-\eta}{q}}.
$$
This implies that
$$
(H^{p}(\widehat{\A}\:)_{L},H^{p}(\widehat{\A}\:)_{R})_{\eta}=
(H^{p_1}(\widehat{\A}\:)^\eta,H^{p_0}(\widehat{\A}\:))_{\eta}.
$$
\end{theorem}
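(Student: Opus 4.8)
The plan is to derive the second displayed identity from the first and then to prove the first by two inclusions, only one of which is serious. For the derivation, note that by definition $H^{p}(\widehat{\A})_{L}=H^{p}(\widehat{\A})D^{1/q}$ and $H^{p}(\widehat{\A})_{R}=D^{1/q}H^{p}(\widehat{\A})$ (recall $\tfrac1p+\tfrac1q=\tfrac1{p_0}$), so the left side of the second display is $(H^{p}(\widehat{\A})D^{1/q},D^{1/q}H^{p}(\widehat{\A}))_{\eta}$, which equals $D^{\eta/q}H^{p}(\widehat{\A})D^{(1-\eta)/q}$ by the first display. Choosing $p_1$ by $\tfrac1p=\tfrac{1-\eta}{p_0}+\tfrac{\eta}{p_1}$, Theorem \ref{thm:interpolationhp-hat} gives $(H^{p_1}(\widehat{\A})^{\eta},H^{p_0}(\widehat{\A}))_{\eta}=i_{p}^{\eta}(H^{p}(\widehat{\A}))=D^{\eta/q}H^{p}(\widehat{\A})D^{(1-\eta)/q}$ as well; hence the second display follows from the first. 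It thus remains to prove $(X_0,X_1)_{\eta}=i_{p}^{\eta}(H^{p}(\widehat{\A}))$, where $X_0=i_{p}^{0}(H^{p}(\widehat{\A}))=H^{p}(\widehat{\A})D^{1/q}$ and $X_1=i_{p}^{1}(H^{p}(\widehat{\A}))=D^{1/q}H^{p}(\widehat{\A})$; as in \eqref{eq:densety-analytic function space-Hp} one first checks, using that $X_0+X_1$ is a subspace of the ambient $L^{p_0}$, that $A\F(X_0,X_1)$ is dense in $\F(X_0,X_1)$, so that $(X_0,X_1)_{\eta}=[X_0,X_1]_{\eta}$.

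For the inclusion $i_{p}^{\eta}(H^{p}(\widehat{\A}))\subseteq(X_0,X_1)_{\eta}$ I would insert the weight analytically. By Lemma \ref{lem:analytic} with weight $0$ (applied to the maximal subdiagonal algebra $\widehat{\A}$), the elements $x=aD^{1/p}$ are dense in $H^{p}(\widehat{\A})$, and we may take $a\in\widehat{\A}$ with $z\mapsto\sigma_{z}^{\widehat{\varphi}}(a)$ entire and bounded on $\overline{S}$ (such $a$ are dense); so it suffices to treat such $x$. Set $F(z)=D^{z/q}xD^{(1-z)/q}$. Using $D^{z/q}aD^{-z/q}=\sigma_{-iz/q}^{\widehat{\varphi}}(a)$ and $\tfrac1p+\tfrac1q=\tfrac1{p_0}$, one computes $F(z)=\sigma_{-iz/q}^{\widehat{\varphi}}(a)\,D^{1/p_0}$, a bounded analytic function into $X_0+X_1$ with $F(\eta)=i_{p}^{\eta}(x)$. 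On the edges, $F(it)=(\sigma_{t/q}^{\widehat{\varphi}}(a)D^{1/p})D^{1/q}\in X_0$ and $F(1+it)=D^{1/q}(\sigma_{t/q}^{\widehat{\varphi}}(a)D^{1/p})\in X_1$ (here $\sigma_{t/q}^{\widehat{\varphi}}(\widehat{\A})=\widehat{\A}$ by the maximality of $\widehat{\A}$); since $y\mapsto D^{is}yD^{-is}$ is $\|\cdot\|_{p}$-isometric on $H^{p}(\widehat{\A})$, we get $\|F(it)\|_{X_0}=\|F(1+it)\|_{X_1}=\|x\|_{p}$. Thus $F\in\F(X_0,X_1)$, and, the analytic convexity of $X_0+X_1$ absorbing the supremum over the strip, $\|i_{p}^{\eta}(x)\|_{\eta}\le C\|x\|_{p}$; the inclusion follows by density.

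The reverse inclusion $(X_0,X_1)_{\eta}\subseteq i_{p}^{\eta}(H^{p}(\widehat{\A}))$ is the main obstacle: the natural weight-removing substitution $g(z)\mapsto D^{-z/q}g(z)D^{-(1-z)/q}$ involves negative powers of the unbounded density $D$ and leaves $L^{p}$ for interior $z$. I would therefore pass to the finite subalgebras $\A_n\subset\R_n$ of Haagerup's reduction, where the density $d_n$ of $\widehat{\varphi}|_{\R_n}$ is bounded with bounded inverse. For $g\in A\F(X_0^{(n)},X_1^{(n)})$, with $X_j^{(n)}$ the $d_n$-analogues of $X_j$, the substitution $\widetilde g(z)=d_n^{-z/q}g(z)d_n^{-(1-z)/q}$ now yields a bounded analytic $H^{p}(\A_n)$-valued function whose edge values, computed as in the second paragraph, satisfy $\|\widetilde g(it)\|_{p}=\|g(it)\|_{X_0^{(n)}}$ and $\|\widetilde g(1+it)\|_{p}=\|g(1+it)\|_{X_1^{(n)}}$. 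The three-line inequality underlying Proposition \ref{lem:pisier-xu}, applied with both endpoint exponents equal to $p$, then gives $\|\widetilde g(\eta)\|_{p}\le\|g\|_{A\F}$, i.e. $\|a\|_{p}\le\|i_{p}^{\eta}(a)\|_{(X_0^{(n)},X_1^{(n)})_{\eta}}$ for $a\in H^{p}(\A_n)$; together with the first inclusion this establishes the Stein--Weiss identity inside each $\R_n$ with constants uniform in $n$. I would finally transfer to $\R$ exactly as in the proof of Theorem \ref{thm:interpolationhp-hat}: since $\bigcup_n H^{p}(\A_n)$ is dense in $H^{p}(\widehat{\A})$ by Theorem \ref{thm:denseHp}(2), the uniform estimate forces an $H^{p}$-approximating sequence to be Cauchy in $(X_0,X_1)_{\eta}$, and the first inclusion identifies its limit, giving the equivalence of $\|i_{p}^{\eta}(\cdot)\|_{\eta}$ with $\|\cdot\|_{p}$ on all of $H^{p}(\widehat{\A})$. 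The delicate point in this last step---shared with Theorem \ref{thm:interpolationhp-hat} and handled through the machinery of \cite{H2,X}---is the compatibility of the $d_n$-weighted couples $(X_0^{(n)},X_1^{(n)})$ with the $D$-weighted couple $(X_0,X_1)$, which is what allows the finite identities to survive the limit.
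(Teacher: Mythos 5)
Your proposal is correct and follows essentially the same route as the paper: the inclusion $D^{\eta/q}H^{p}(\widehat{\A})D^{(1-\eta)/q}\subseteq(\cdot,\cdot)_{\eta}$ via the explicit analytic function $z\mapsto D^{z/q}xD^{(1-z)/q}$ on the dense set $\widehat{\A}_aD^{1/p}$ furnished by Lemma \ref{lem:analytic} (the paper merely adds a regularizing factor $e^{\delta(z^2-\eta^2)}$ where you assume boundedness of the analytic extension), and the reverse inclusion via the three-lines estimate obtained through Haagerup reduction, which is exactly what the paper invokes as ``the method in the proof of Proposition \ref{pro:trueline}''. Your weight-removal substitution $d_n^{-z/q}g(z)d_n^{-(1-z)/q}$ at the level of the finite algebras $\A_n$ is a legitimate and more explicit rendering of the step the paper leaves implicit, and the derivation of the second display from the first via Theorem \ref{thm:interpolationhp-hat} matches the paper.
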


\begin{proof}
Assume that $f\in A\F(H^{p}(\widehat{\A}\:)D^{\frac{1}{q}},D^{\frac{1}{q}}H^{p}(\widehat{\A}\:))$ with
$f(\eta) = x$ and
\be
\|f\|_{A\F(H^{p_1}(\widehat{\A}\:)^\eta,H^{p_0}(\widehat{\A}\:))}\le1.
\ee
Using the method in the proof of Proposition \ref{pro:trueline}, we get that
$$
\|i_{p}^{\eta}(x)\|_{p}=\|f(\eta)\|_{p}\le1,
$$
 So, we deduce that
\beq\label{inclusion-1}
(H^{p}(\widehat{\A}\:)D^{\frac{1}{q}},D^{\frac{1}{q}}H^{p}(\widehat{\A}\:))_{\eta}\subset
D^{\frac{\eta}{q}}H^{p}(\widehat{\A}\:)D^{\frac{1-\eta}{q}}.
\eeq

Conversely, first assume that  $x\in\widehat{\A}_aD^{\frac{1}{p}}$.  For $\delta>0$,  we define that
$$
f(z)=e^{\delta(z^2-\eta^2)}D^{\frac{z}{q}}xD^{\frac{1-z}{q}}, \qquad 0\leq\mbox{Re}z\leq1.
$$
Clearly,
$$
 f(it)\in H^{p}(\widehat{\A}\:)D^{\frac{1}{q}},\quad\| f(it)\|_{p}\le\|x\|_{p},
 $$
 $$
 f(1+it)\in D^{\frac{1}{q}}H^{p}(\widehat{\A}\:),\quad \| f(1+it)\|_{p}\leq e^{\delta(1-\eta^2)}\|x\|_{p}.
 $$
 Hence,
  $$
 f(z)\in
 F(H^{p}(\widehat{\A}\:)D^{\frac{1}{q}},D^{\frac{1}{q}}H^{p}(\widehat{\A}\:))
 $$
 and
 $$
 \||f|\|\le e^{\delta(1-\eta^2)}\|x\|_{p},\quad f(\eta)=D^{\frac{\eta}{q}}xD^{\frac{1-\eta}{q}}.
$$
It follows that
\be
\|i_{p_\eta}^{\eta}(x)\|_\theta\le e^{\delta(1-\eta^2)}\|i_{p}^{\eta}(x)\|_{p}.
\ee
Letting $\delta\rightarrow0$, we obtain that
\be
\|i_{p}^{\eta}(x)\|_\theta\le\|i_{p_\theta}^{\eta}(x)\|_{p}.
\ee
Therefore,  $i_{p}^{\eta}(x)\in  (H^{p}(\widehat{\A}\:)D^{\frac{1}{q}},D^{\frac{1}{q}}H^{p}(\widehat{\A}\:))_{\eta}$. Using  Lemma \ref{lem:analytic}, we get that
$$
i_{p}^{\eta}(x)\in  (H^{p}(\widehat{\A}\:)D^{\frac{1}{q}},D^{\frac{1}{q}}H^{p}(\widehat{\A}\:))_{\eta}, \qquad \forall x\in H^{p}(\widehat{\A}\:).
$$
So, we deduce that
\beq\label{inclusion-2}
D^{\frac{\eta}{q}}H^{p}(\widehat{\A}\:)D^{\frac{1-\eta}{q}}\subset (H^{p}(\widehat{\A}\:)D^{\frac{1}{q}},D^{\frac{1}{q}}H^{p}(\widehat{\A}\:))_{\eta}.
\eeq
By \eqref{inclusion-1} and \eqref{inclusion-2}, we obtain the desired result. The second result follows immediately by virtue of Theorem \ref{thm:interpolationhp-hat}.
\end{proof}

\section{ Complex interpolation theorem  for the type 1 subdiagonal algebras case}

We embedding $\mathcal{A}_a$ into $H^{p}(\mathcal{A})$ via the map $x\mapsto xD^\frac{1}{p}$ to reformulate the complex interpolation $(H^{p_0}(\mathcal{A}), H^{p_1}(\mathcal{A}))_\theta$ as the following way. Let $0<p_0<p_1\le\8$, $0\le\eta\le1$ and $0<\theta<1$.  We denote by
$A\F_a$
the family of functions of the form $f(z)=\sum_{k=1}^n f_k(z)x_k$ with $f_k$ in $A(S)$ and $x_k$ in $\mathcal{A}_a$.
Equipped with the (quasi) norm
 \be
 \big\|f\big\|_{A\F_a(p_1,p_0)}=\max\left\{
 \sup_{t\in\real}\big\|f(it)D^{\frac{1}{p_0}}\big\|_{p_0}\,,\;
 \sup_{t\in\real}\big\|i_{p_1}^{\eta}(f(1+it)D^{\frac{1}{p_1}})\big\|_{p_1}\right\},
\ee
$A\F_a$ becomes a (quasi) Banach space. We define the
 norm \\ $\|\cdot\|_{\theta,a}\;(0<\theta<1)$ on $\mathcal{A}_{a}D^{\frac{1}{p_\theta}}$ ($\frac{1}{p_\theta}=\frac{1-\theta}{p_0}+\frac{\theta}{p_1}$) as follows: if $x\in\mathcal{A}_{a}$, then
 \be
 \|i_{p_\theta}^{\eta}(xD^{\frac{1}{p_\theta}})\|_{\theta,a}=\inf\big\{\big\|f\big\|_{A\F_a(p_1,p_0)}\;:\; f(\theta)=x,\;
 f\in A\F_a\big\}.
 \ee
By \eqref{eq:densety-analytic function space-Hp} and Lemma \ref{lem:analytic},
\be
\mbox{the completion
of}\; (\mathcal{A}_{a}D^{\frac{1}{p_\theta}}, \|\cdot\|_{\theta,a})=
(H^{p_1}(\A)^\eta,H^{p_0}(\A))_\theta.
\ee
Since $i_{p_1}^{\eta}(\mathcal{A}_{a}D^{\frac{1}{p_1}})=\mathcal{A}_{a}D^{\frac{1}{p_0}}$ and $i_{p_\theta}^{\eta}(\mathcal{A}_{a}D^{\frac{1}{p_\theta}})=\mathcal{A}_{a}D^{\frac{1}{p_0}}$, 
\beq\label{eq:analytic-inter}
(H^{p_1}(\A)^\eta,H^{p_0}(\A))_\theta=(H^{p_1}(\A)^0,H^{p_0}(\A))_\theta
\eeq 
with isometric norms.

\begin{lemma}\label{lem:convergence-sequence}
  Let $0 < p < \8$, $(x_n)_{n\ge1}\subset\M$ be a sequence  and $x\in\M$. If $(x_nD^{\frac{1}{p_0}})_{n\ge1}$ converges to $xD^{\frac{1}{p_0}}$  in norm for some $0<p_0<\8$,
then
this is true  for all $0 < p < \8$.
\end{lemma}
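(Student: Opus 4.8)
The plan is to normalize first: put $y_n=x_n-x\in\M$, so that the hypothesis becomes $\|y_nD^{1/p_0}\|_{p_0}\to0$ and the goal is $\|y_nD^{1/p}\|_p\to0$ for every $0<p<\8$. A recurring ingredient is that, because $\varphi$ is a state, $\|D^{1/s}\|_s=tr(D)^{1/s}=\varphi(1)^{1/s}=1$ for all $0<s\le\8$; this normalization is what makes the different endpoints directly comparable.

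For the downward range $0<p\le p_0$ I would argue by the generalized Hölder inequality for Haagerup $L^p$-spaces. With $\tfrac1s=\tfrac1p-\tfrac1{p_0}\ge0$ and $D^{1/p}=D^{1/p_0}D^{1/s}$,
\[
\|y_nD^{1/p}\|_p=\|(y_nD^{1/p_0})D^{1/s}\|_p\le\|y_nD^{1/p_0}\|_{p_0}\,\|D^{1/s}\|_s=\|y_nD^{1/p_0}\|_{p_0}\to0 .
\]
Thus the conclusion is immediate for all $p\le p_0$; in fact $yD^{1/p_0}\mapsto yD^{1/p}$ is a contraction whenever $p\le p_0$.

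The hard part will be the upward range $p>p_0$, where no such contraction is available. Here I would interpolate between the endpoints $\M=L^\8(\M)$ and $L^{p_0}(\M)$. Consider $F_n(z)=y_nD^{z/p_0}$ on $\overline S$: on $\mathrm{Re}\,z=0$ one has $\|F_n(it)\|_\8=\|y_n\|_\8$ since $D^{it/p_0}$ is unitary, while on $\mathrm{Re}\,z=1$ one has $\|F_n(1+it)\|_{p_0}=\|y_nD^{1/p_0}\|_{p_0}$. Taking $\theta=p_0/p\in(0,1)$, so that $F_n(\theta)=y_nD^{1/p}$, the three-lines estimate (cf. \eqref{eq:interpolation-Lp} and \cite{GYZ,Kos}) gives
\[
\|y_nD^{1/p}\|_p\le\|y_n\|_\8^{\,1-p_0/p}\,\|y_nD^{1/p_0}\|_{p_0}^{\,p_0/p},
\]
which tends to $0$ as soon as $\sup_n\|y_n\|_\8<\8$.

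I expect the single genuine obstacle to be exactly this uniform bound $\sup_n\|x_n-x\|_\8<\8$: the upward passage fails for unbounded sequences, as one already sees in the commutative tracial model $\M=L^\8(0,1)$, where $y_n=\e_n^{-1/p}\chi_{(0,\e_n)}$ satisfies $\|y_nD^{1/p_0}\|_{p_0}\to0$ while $\|y_nD^{1/p}\|_p=1$ for $p>p_0$. Hence the proof really hinges on producing (from the ambient hypotheses, or as an added standing assumption) a uniform $\M$-bound for the sequence; granting that, the Hölder step and the interpolation step above together close the argument for every $0<p<\8$.
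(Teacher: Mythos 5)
Your two-range strategy is sound as far as it goes: the H\"older estimate for $p\le p_0$ is correct (the paper does not treat this range separately), and the three-lines bound $\|y_nD^{1/p}\|_p\le\|y_n\|_\8^{\,1-p_0/p}\|y_nD^{1/p_0}\|_{p_0}^{\,p_0/p}$ is the natural tool for $p>p_0$ \emph{provided} $\sup_n\|y_n\|_\8<\8$. The paper's own proof takes a genuinely different route that makes no use of a uniform bound: from \eqref{eq:generalized singular-haagerup} it deduces that $y_nD^{1/p_0}\to0$ in the measure topology of $L_0(\N,\tau)$, multiplies by the bounded operators $f_m(D)=D^{-1/p_0}e_{(1/m,\8)}(D)$ to get $y_ne_m\to 0$ in measure with $e_m=e_{(1/m,\8)}(D)$, claims $\tau(e_m^{\perp})\to0$ in order to conclude $y_n\to0$ in measure, and finishes with $\|y_nD^{1/p}\|_p=\mu_1(y_nD^{1/p})\le\mu_{1/2}(y_n)\,\mu_{1/2}(D^{1/p})=2^{1/p}\mu_{1/2}(y_n)$.

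You should not, however, try to emulate that route, because it fails precisely at the point where it would circumvent your obstacle. Since $\tau\circ\hat{\sigma}_t=e^{-t}\tau$, every nonzero projection of $\M$ (in particular $1$) has infinite $\tau$-trace, whereas $\mu_t(D)=t^{-1}$ gives $\tau(e_{(1/m,\8)}(D))=\lambda_{1/m}(D)=m<\8$; hence $\tau(e_m^{\perp})=\8$ for every $m$, and the paper's claim that these traces tend to $0$ is false. Moreover, by the fact quoted in Section 2 from \cite[Theorem 3.1]{L3}, $\mu_{1/2}(y_n)=\|y_n\|_\8$ for $y_n\in\M$, so the paper's final inequality reads $\|y_nD^{1/p}\|_p\le 2^{1/p}\|x_n-x\|_\8$: the measure-topology argument, were it valid, would be deriving operator-norm convergence of $x_n$ to $x$, which the hypothesis certainly does not supply. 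Your counterexample is therefore decisive and is not merely a limitation of your method: when $\varphi$ is a trace, $L^p(\M,\varphi)$ identifies isometrically with the ordinary $L^p$ space via $yD^{1/p}\mapsto y$, so $y_n=\e_n^{-1/p}\chi_{(0,\e_n)}$ in $\M=L^\8(0,1)$ satisfies the hypothesis for every $p_0<p$ while violating the conclusion at $p$. The lemma as stated is false; what your argument does prove is the corrected statement carrying the additional hypothesis $\sup_n\|x_n-x\|_\8<\8$, and it is only in that form that the lemma can safely be invoked in the proof of Theorem \ref{thm:reisz}.
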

\begin{proof}
  By \eqref{eq:generalized singular-haagerup}, for any $t>0$, we have that
  $$
  \mu_t(x_nD^{\frac{1}{p_0}}-xD^{\frac{1}{p_0}})=t^{-\frac{1}{p_0}}\|x_nD^{\frac{1}{p_0}}-xD^{\frac{1}{p_0}}\|_{p_0}\rightarrow0\qquad\mbox{as}\quad n\rightarrow\8.  
  $$
Using \cite[Lemma 3.1]{FK}, we get $(x_nD^{\frac{1}{p_0}})_{n\ge1}$ converges to $xD^{\frac{1}{p_0}}$ in the measure topology. Set $e_n=e_{(\frac{1}{n},\8)}(D)$, for any $n\in\mathbb{N}$. Then $e_n\in\N$ and $(e_n)_{n\ge1}$ increases strongly to $1$, and so $\tau(e_n^\perp)\rightarrow0$ as $n\rightarrow\8$. Let $f_m(\lambda)=\lambda^{-\frac{1}{p_0}}\chi_{(\frac{1}{m},\8)}$, for any $m\in\mathbb{N}$. Then $f_m(D)\in\N$ and 
$$
x_ne_m=x_nD^{\frac{1}{p_0}}f_m(D)\rightarrow xD^{\frac{1}{p_0}}f_m(D)=xe_m
$$
in the measure topology. Let $\varepsilon> 0, \delta> 0$. Choose $m_0\in \mathbb{N}$ such that $\tau(e_{m_0}^\perp)<\frac{\delta}{2}$. Since $x_ne_{m_0}\rightarrow xe_{m_0}$
in the measure topology, there exists $n_0\in \mathbb{N}$ and a projection $f\in\N$ such that $\tau(f^\perp)<\frac{\delta}{2}$ and
$$
\|(xe_{m_0}-x_ne_{m_0})f\|<\varepsilon,\qquad\mbox{for any }\quad n\ge n_0.
$$
Let $e = e_{m_0}\wedge f$. Then $\tau(e^\perp)<\delta$ and for any $n\ge n_0$,
$$
\|(x-x_n)e\|=\|(xe_{m_0}-x_ne_{m_0})e\|=\|(xe_{m_0}-x_ne_{m_0})fe\|<\varepsilon.
$$
Therefore, $x_n\rightarrow x$ in the measure topology. Applying  \eqref{eq:generalized singular-haagerup} and  \cite[Lemma 2.5 (vii), Lemma 3.1]{FK}, we deduce that for any $0<p<\8$,
$$\begin{array}{rl}
    \|x_nD^{\frac{1}{p}}-xD^{\frac{1}{p}}\|_{p}&=\mu_1(x_nD^{\frac{1}{p}}-xD^{\frac{1}{p}}) \\
     & \le \mu_{\frac{1}{2}}(x_n-x)\mu_{\frac{1}{2}}(D^{\frac{1}{p}}) \\
     &  = \mu_{\frac{1}{2}}(x_n-x)2^{\frac{1}{p}}.
  \end{array}
$$
Thus  $(x_nD^{\frac{1}{p}})_{n\ge1}$ converges to $xD^{\frac{1}{p}}$  in norm.
\end{proof}

We recall that a closed subspace $K$
of $L^2(\mathcal{M})$ is called a right (resp. left) $\mathcal{A}$-invariant subspace of $L^2(\mathcal{M})$ if $K\mathcal{A}\subset K$ (resp. $\mathcal{A}K\subset K$).

Let $K$ be a right $\mathcal{A}$-invariant subspace of $L^{2}({\mathcal{M}})$. We call
$$
W=K\ominus[K\mathcal{A}_{0}]_{2}
$$
is  the right wandering subspace of $K$. We say that  $K$ is  of type 1 if $W$ generates $K$ as an
$\mathcal{A}$-module (that is $K=[WA]_{2})$ and   $K$ is  type 2 if $W=0$. See \cite{BL3,L2} for more details.

Following \cite{J2,ZJ}, we call the $\mathcal{A}$ is a type 1 subdiagonal subalgebra of $\mathcal{M}$, if every right $\mathcal{A}$-invariant subspace in $H^2(\mathcal{A})$ is of type 1.

 Let $0<p\le\8$. We call $h\in H^p(\A)$ is  right outer or
left outer  according to
$[h\mathcal{A}]_p=H^p(\mathcal{A})$ or $[\mathcal{A}h]_p=H^p(\mathcal{A})$.

\begin{theorem}\label{thm:reisz} Let $\mathcal{A}$ be a type 1 subdiagonal subalgebra.   If $n\in\mathbb{N}$ and $0<p<1$ such that $np\ge1$, then for any $h\in H^p(\mathcal{A})$ and $\varepsilon>0$, there exist $h_k\in H^{np}(\mathcal{A})$
$(k=1,2,\cdots, n)$ and a contraction $a\in\A$ such that
$$
h=ah_1h_2\cdots h_n,\quad\|h_k\|_{np}\le\|h\|_p^{\frac{1}{n}}+\varepsilon\quad(k=1,2,\cdots, n).
$$
\end{theorem}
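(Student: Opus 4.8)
The plan is to separate the problem into two essentially independent tasks: extracting a single left contraction, and splitting an \emph{outer} element into $n$ balanced factors. First I would use that $\A$ is of type $1$ to write $h=a\,h_o$, where $a\in\A$ is a contraction (the inner part) and $h_o\in H^p(\A)$ is right outer with $\|h_o\|_p=\|h\|_p$; such an inner--outer factorization is exactly what the type~$1$ hypothesis buys (cf. \cite{J2,ZJ,L2,BL3}). This places the whole contraction on the far left, so that it remains only to factor the outer element $h_o$ as an honest product $h_o=h_1h_2\cdots h_n$ with $h_k\in H^{np}(\A)$. Since $np\ge1$, every factor then lives in the Banach range, which is what makes the norm estimates tractable, and the single contraction $a$ in the conclusion is precisely the inner part $a$ produced here.

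To produce the factors I would peel them off one at a time from the left. Given a right outer $g\in H^{r}(\A)$ with $\frac1r=\frac1{np}+\frac1t$ (at the stage when $m$ factors remain one has $g\in H^{np/m}(\A)$ and $t=np/(m-1)$, so $r\le1\le np$), I would construct a right outer $g_1\in H^{t}(\A)$ whose modulus equals the appropriate power $|g|^{\,r/t}$ of the modulus of $g$ (an ``outer root''), and then set $h_{k}:=g\,g_1^{-1}$. The outer property of $g$ is precisely what forces $g\,g_1^{-1}$ to extend to an element of $H^{np}(\A)$ with the exact norm identity $\|g\,g_1^{-1}\|_{np}=\|g\|_r^{\,r/np}$, while $\|g_1\|_t=\|g\|_r^{\,r/t}$; the algebra here is just $\frac{r}{np}=1-\frac{r}{t}$, so that $\tr(|g_1|^t)=\tr(|g|^{r})$ and $\tr(|g\,g_1^{-1}|^{np})=\tr(|g|^{r})$. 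No new contraction is introduced at this step, because the peeling $g=h_{k}\,g_1$ is exact. Starting from $g=h_o$ and iterating $n-1$ times, the computation $\bigl\||h_o|^{1/n}\bigr\|_{np}^{np}=\tr\bigl(|h_o|^{p}\bigr)=\|h_o\|_p^{p}$ shows that every factor comes out perfectly balanced, $\|h_k\|_{np}=\|h_o\|_p^{1/n}=\|h\|_p^{1/n}$; the $+\varepsilon$ in the statement is then only needed to absorb the approximation errors below. Combining with the first paragraph yields $h=a\,h_1\cdots h_n$.

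I expect the main obstacle to be the existence and norm control of the outer roots $g_1$ and of the one-sided quotients $g\,g_1^{-1}$ in the quasi-Banach, non-tracial Haagerup regime $0<r<1$, where the Szeg\H{o}/Jensen machinery underlying the classical outer-function theory is not directly available. The natural route around this is Haagerup's reduction: by Theorem~\ref{thm:denseHp} the space $H^p(\A)$ embeds isometrically in $H^p(\widehat{\A})$ and $\bigcup_{n\ge1}H^p(\A_n)$ is dense in it, while each $\A_n$ is a \emph{finite} type~$1$ subdiagonal algebra, where inner--outer factorization and the outer-root construction are classical for all $0<p<\8$. I would therefore prove the factorization first inside each $\A_n$, then (after reducing, via Lemma~\ref{lem:analytic}, to analytic elements $xD^{1/p}$ with $x\in\A_a$, where the moduli and their powers are genuinely defined) approximate $h$ by a sequence from $\bigcup_n H^p(\A_n)$, factor the approximants, and pass to the limit. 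Here Lemma~\ref{lem:convergence-sequence} is the crucial device, since it transports norm convergence of the approximants simultaneously to the exponents $np$ and $t$, so that the balanced estimates survive in the limit, while weak*-compactness of the unit ball of $\A$ yields the limiting contraction $a$. The delicate point to verify is that the limiting factors can be kept inside $H^{np}(\A)$ (and the inner part inside $\A$), rather than merely in $H^{np}(\widehat{\A})$; for this one uses the conditional expectations $\Phi_n,\widehat{\mathcal E}$ and their compatibility $\widehat{\mathcal E}\circ\Phi_n=\Phi_n\circ\widehat{\mathcal E}$ together with the $1$-complementation of Theorem~\ref{thm:denseHp}, which is exactly where the hypothesis $np\ge1$, putting all factors in the complemented Banach range, enters.
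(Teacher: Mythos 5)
Your overall architecture (extract one contraction, then split the remainder into $n$ balanced factors living in the Banach range $np\ge1$) is in the right spirit, but both of your key ingredients are unavailable as stated, and the paper's proof is organized precisely to avoid them. First, you open with an inner--outer factorization $h=a\,h_o$ of $h$ \emph{itself} inside $H^p(\A)$ with $0<p<1$, with $a\in\A$ a contraction and $h_o$ right outer. None of the sources you cite provide this in the $\sigma$-finite Haagerup setting below $p=1$; the factorization result actually available, \cite[Theorem 2.3]{ZJ}, applies to elements of $L^q(\M)$ with $q\ge 1$. That is exactly why the paper never factors $h$ directly: it takes the polar decomposition $h=u|h|$, writes $|h|=|h|^{1/n}\cdots|h|^{1/n}$ with each $|h|^{1/n}\in L^{np}(\M)$ (Banach range by the hypothesis $np\ge1$), and applies \cite[Theorem 2.3]{ZJ} iteratively from the right to $|h|^{1/n}v_k$, pushing the contractions leftward; the $+\varepsilon$ in the conclusion comes from that theorem, not from an approximation scheme. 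The accumulated left contraction $a$ is then a priori only in $\M$, and a separate argument (right-outerness of the $h_k$ gives $aH^{p}(\A)\subset H^{p}(\A)$, hence $aD^{1/p}\in H^p(\A)$, then Proposition~\ref{lem:hp}, Lemma~\ref{lem:convergence-sequence} and \cite[Proposition 2.8(1)]{BR1} put $a$ in $\A$) is required --- a step your proposal gets for free only because it assumes the unproved inner--outer decomposition.

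Second, your fallback --- prove the factorization in each finite $\A_n$ via Haagerup reduction and pass to the limit --- does not close the gap, because multiplicative factorizations do not survive the available limiting procedures: if $h^{(m)}\to h$ in $H^p$ and $h^{(m)}=a^{(m)}h_1^{(m)}\cdots h_n^{(m)}$, the norm-bounded factor sequences at best admit weak or weak* convergent subsequences, and the product of the weak limits need not equal the limit of the products. Lemma~\ref{lem:convergence-sequence} transports \emph{norm} convergence of sequences of the special form $x_mD^{1/p}$ between exponents; it does not rescue convergence of products of weakly convergent factors, and the $1$-complementation in Theorem~\ref{thm:denseHp} gives no control over products either. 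Likewise, the exact norm identities you derive for the ``outer roots'' $g_1$ and quotients $g\,g_1^{-1}$ rest on trace computations that are not justified in the non-tracial Haagerup setting. As written, the limit-passing step would fail; the paper's direct use of \cite[Theorem 2.3]{ZJ} at the level $np\ge1$ is what makes the argument work without any reduction.
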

\begin{proof} Let $h=u|h|$ be the
polar decomposition of $h$. Then
 $$
  h=v|h|^{1/n} |h|^{1/n}\cdots\, |h|^{1/n}.
 $$
 It is clear that
 $|h|^{1/n}\in L^{np}(\mathcal{M})$. By \cite[Theorem 2.3]{ZJ}, we have a
factorization
 $|h|^{1/n}=v_n h_n$
with $v_n\in\mathcal{M}$ a contraction, $h_n\in
H^{np}(\mathcal{A})$ a right outer such that
$$
\|h_n\|_{np}<\||h|^{1/n}\|_{np}+\varepsilon=\|h\|_p^{\frac{1}{n}}+\varepsilon.
$$
 Repeating this argument, we again get
a same factorization of $|h|^{1/n}v_n$:
 $$
 |h|^{1/n}v_n=v_{n-1} h_{n-1};
 $$
and then for $|h|^{1/n}v_{n-1}$, and so on. In this way, we obtain a
factorization:
 $$h=a h_1\cdots h_n,$$
where $a\in\mathcal{M}$ is a contraction, $h_k\in
H^{np}(\mathcal{A})\;(k=1,2,\cdots, n)$ is a right outer. Since $a h_1\cdots h_n\mathcal{A}\subset H^{p}(\mathcal{A})$, we
then see that
$$
aH^{p}(\mathcal{A})\subset H^{p}(\mathcal{A}).
$$
Hence, $aD^{\frac{1}{p}}\in H^{p}(\mathcal{A})$. By Proposition \ref{lem:hp}, there a sequence  $(a_n)_{n\ge1}\subset\A$ such that $(a_nD^{\frac{1}{p}})_{n\ge1}$ converges to $aD^{\frac{1}{p}}$  in norm in $L^p(\M)$. Using Lemma \ref{lem:convergence-sequence}, we get that  $(a_nD^{\frac{1}{2}})_{n\ge1}$ converges to $aD^{\frac{1}{2}}$  in norm in norm in $L^2(\M)$. This gives $aD^{\frac{1}{2}}\in H^{2}(\mathcal{A})$. Applying \cite[Proposition 2.8(1)]{BR1}, we obtain that $a\in\A$.
\end{proof}

Let $0<p,q\le\8$. We define 
$$
 H^p(\A)\odot H^q(\A)=\{x:\;x=yz,\;y\in H^p(\A),\;z\in H^q(\A)\}
 $$ 
 and 
 $$\|x\|_{H^p(\A)\odot H^q(\A)}=\inf\big\{\|y\|_p\,\|z\|_q\;:\; x=yz,\; y\in H^p(\A),\;
 z\in H^q(\A) \big\}.$$

\begin{theorem}
   Let $0< p, q, r\le\8$ such that $1/p=1/q+1/r$.
Then for $h\in H^p(\A)$ and $\varepsilon>0$ there exist $h_1\in H^q(\A)$ and
$h_2\in H^r(\A)$ such that
 $$h=h_1h_2\quad\mbox{and}\quad \|h_1\|_q\,\|h_2\|_r\le \|h\|_p+O(\varepsilon).$$
Consequently,
$$
H^p(\A)=H^q(\A)\odot H^r(\A).
$$
\end{theorem}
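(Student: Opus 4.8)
The plan is to prove the factorization $h = h_1 h_2$ with the stated norm control, which immediately yields the set-theoretic identity $H^p(\A) = H^q(\A) \odot H^r(\A)$ together with the equality of (quasi-)norms. The strategy is to reduce the general case $1/p = 1/q + 1/r$ to the iterated factorization already established in Theorem \ref{thm:reisz}. First I would handle the main case where all three exponents are finite; the endpoint cases $q = \infty$ or $r = \infty$ (forcing the other to equal $p$) are handled separately since then one of the factors may be taken in $\M$ or $\A$ directly, using Proposition \ref{lem:hp} and the convergence machinery of Lemma \ref{lem:convergence-sequence}.

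For the finite case, the key device is to split off the two factors by a suitable power decomposition of $|h|$. Writing $h = u|h|$ for the polar decomposition, I would set $h = u\,|h|^{p/q}\,|h|^{p/r}$, noting $p/q + p/r = 1$, so that $|h|^{p/q} \in L^q(\M)$ and $|h|^{p/r} \in L^r(\M)$ with $\||h|^{p/q}\|_q = \|h\|_p^{p/q}$ and $\||h|^{p/r}\|_r = \|h\|_p^{p/r}$. The obstacle is that these $L^q$ and $L^r$ factors are not a priori in $H^q(\A)$ and $H^r(\A)$. To push them into the Hardy spaces I would invoke the type 1 Riesz-type factorization: choose a natural number $n$ with $np \ge 1$ and apply Theorem \ref{thm:reisz} to obtain $h = a h_1 \cdots h_n$ with $a \in \A$ a contraction, each $h_k \in H^{np}(\A)$, and $\|h_k\|_{np} \le \|h\|_p^{1/n} + \varepsilon$. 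The point is that by choosing $n$ large and grouping the factors appropriately, a block of the $h_k$'s multiplied together lands in $H^q(\A)$ and the complementary block in $H^r(\A)$, with the product of the two group-norms controlled by $\prod_k \|h_k\|_{np}$ via Hölder for the Haagerup $L^p$-spaces.

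More precisely, since $1/q + 1/r = 1/p = n/(np)$, I would choose integers (or, after a further refinement of $n$, approximate rationals) $n_1, n_2$ with $n_1 + n_2 = n$, $n_1/(np) \approx 1/q$ and $n_2/(np) \approx 1/r$, then set $h_1 = a h_1 \cdots h_{n_1}$ and $h_2 = h_{n_1 + 1} \cdots h_n$. By the Hölder inequality in the Haagerup setting, $\|h_1\|_q \le \prod_{k \le n_1}\|h_k\|_{np}$ and $\|h_2\|_r \le \prod_{k > n_1}\|h_k\|_{np}$, so that $\|h_1\|_q \|h_2\|_r \le \prod_{k=1}^n \|h_k\|_{np} \le (\|h\|_p^{1/n} + \varepsilon)^n = \|h\|_p + O(\varepsilon)$; that each partial product lies in the correct Hardy space follows because a product of $H^{np_k}(\A)$ elements lies in $H^s(\A)$ for the conjugate $s$, using that $\A$ (hence $\widehat{\A}$, hence each finite-level algebra) is closed under multiplication and that $H^p(\A) = [\A D^{1/p}]_p$.

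The main obstacle I anticipate is the commensurability gap: $1/q$ and $1/r$ need not be exact multiples of $1/(np)$, so grouping the $n$ equal-exponent factors will not exactly reproduce the target exponents $q$ and $r$. I would resolve this by first passing to a common refinement — replace $n$ by a large multiple $Nn$ so that $N n_1/(N n p)$ and $N n_2/(N n p)$ approximate $1/q, 1/r$ to within any prescribed tolerance — and then absorb the residual mismatch into the $\varepsilon$-term, using the continuity of the $L^p$-norms in $p$ for the fixed operator $|h|$ (here \eqref{eq:generalized singular-haagerup} makes the dependence explicit and harmless) together with Lemma \ref{lem:convergence-sequence} to control convergence of the approximants in the relevant norms. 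The reverse inclusion $H^q(\A) \odot H^r(\A) \subset H^p(\A)$ and the inequality $\|h\|_p \le \|h_1\|_q \|h_2\|_r$ are routine consequences of Hölder, so the equality of norms follows once the factorization is in hand.
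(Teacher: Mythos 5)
Your overall strategy (iterate the outer factorization of Theorem \ref{thm:reisz}, group the resulting factors into two blocks, and control the product of norms by H\"older) is the right one, but the specific decomposition you chose creates a gap that your proposed fix cannot close. If you factor $|h|$ into $n$ \emph{equal} pieces $|h|^{1/n}$, each block of $n_1$ (resp. $n_2$) factors from $H^{np}(\A)$ lands, by H\"older, in $H^{s}(\A)$ with $1/s=n_1/(np)$ (resp. $1/s=n_2/(np)$). To hit $H^q(\A)$ exactly you need $1/q=n_1/(np)$, i.e.\ $p/q\in\mathbb{Q}$, which is not assumed. Your plan to ``absorb the residual mismatch into the $\varepsilon$-term'' by continuity of $\|\cdot\|_s$ in $s$ cannot work in the Haagerup setting: by the defining homogeneity $\hat{\sigma}_t(x)=e^{-t/s}x$ (equivalently, by \eqref{eq:generalized singular-haagerup}), one has $L^{s}(\M)\cap L^{q}(\M)=\{0\}$ for $s\neq q$, so an element of $H^{s}(\A)$ with $s$ close to $q$ is never close to, nor a perturbation of, an element of $H^{q}(\A)$; membership in the target space is an exact constraint, not an approximate one. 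Lemma \ref{lem:convergence-sequence} does not help here either, since it concerns a fixed exponent along a sequence of operators, not varying exponents.

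The paper sidesteps this entirely by building the two exponents into the decomposition from the start: writing $h=u|h|$ and
\begin{equation*}
h=u\,|h|^{p/(nq)}\cdots|h|^{p/(nq)}\,|h|^{p/(nr)}\cdots|h|^{p/(nr)}
\end{equation*}
with $n$ factors of each kind (the exponents sum to $p/q+p/r=1$), where $n$ is chosen with $np\ge 1$ so that $nq,nr\ge 1$. The successive outer-factorization argument in the proof of Theorem \ref{thm:reisz} is then applied verbatim to this mixed list: each $|h|^{p/(nq)}$ (times the contraction inherited from the previous step) is replaced by a contraction times a right outer element $x_k\in H^{nq}(\A)$ with $\|x_k\|_{nq}\le\|h\|_p^{p/(nq)}+\varepsilon$, and similarly $y_k\in H^{nr}(\A)$. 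Setting $h_1=ax_1\cdots x_n$ and $h_2=y_1\cdots y_n$ gives factors lying \emph{exactly} in $H^q(\A)$ and $H^r(\A)$, with $\|h_1\|_q\|h_2\|_r\le\prod_k\|x_k\|_{nq}\prod_k\|y_k\|_{nr}=\|h\|_p+O(\varepsilon)$. You should replace your equal-exponent splitting by this two-exponent splitting; the rest of your argument (H\"older for products of Hardy-space elements via Proposition \ref{lem:hp}, and the reverse inclusion $H^q(\A)\odot H^r(\A)\subset H^p(\A)$) then goes through as you describe.
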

\begin{proof} The result is true for the case $p\geq1$, as established in \cite[Theorem 2.4]{ZJ}. Now, we consider
 the case when  $0<p_0 < 1$. Choose  $n\in\mathbb{N}$  such that $np\ge1$.  Let $h=u|h|$ be the
polar decomposition of $h$. Then
 $$
  h=v|h|^{p/nq} |h|^{p/nq}\cdots\, |h|^{p/nq}|h|^{p/nr} |h|^{p/nr}\cdots\, |h|^{p/nr}.
 $$
 From the proof of Theorem \ref{thm:reisz}, we know that 
for any  $\varepsilon>0$, there exist $x_k\in H^{nq}(\mathcal{A})$, $y_k\in H^{nr}(\mathcal{A})$
$(k=1,2,\cdots, n)$ and a contraction $a\in\A$ such that
$h=ax_1x_2\cdots x_ny_1y_2\cdots y_n$, $\|x_k\|_{nq}\le\|h\|_p^{\frac{p}{nq}}+\varepsilon$ and $\|y_k\|_{nr}\le\|h\|_p^{\frac{p}{nr}}+\varepsilon$ $(k=1,2,\cdots, n)$.
Let $h_1=ax_1x_2\cdots x_n$ and $h_2=y_1y_2\cdots y_n$. Then $h_1\in H^q(\A)$ and
$h_2\in H^r(\A)$ such that
 $$h=h_1h_2\quad\mbox{and}\quad \|h_1\|_q\,\|h_2\|_r\le \|h\|_p+O(\varepsilon).$$

\end{proof}

\begin{theorem} \label{thm:interpolationhp1} Let $\mathcal{A}$ be a type 1 subdiagonal subalgebra. If $0<\theta<1$ and $0<p_0<p_1\le\8$, then
$$
(H^{p_1}(\A)^\eta,H^{p_0}(\A))_\theta=i_{p_\theta}^{\eta}( H^{p_\theta}(\A)),
$$
 where $p_\theta=\frac{1-\theta}{p_0}+\frac{\theta}{p_1}$.
\end{theorem}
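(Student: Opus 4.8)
The plan is to follow the architecture of Theorem \ref{thm:interpolationhp-hat}, but with the Haagerup--reduction exhaustion (available only for $\widehat{\A}$) replaced by the Riesz-type factorization of Theorem \ref{thm:reisz} and its companion; this is exactly where the type 1 hypothesis enters. First I would reduce to $\eta=0$: by \eqref{eq:analytic-inter} the space $(H^{p_1}(\A)^\eta,H^{p_0}(\A))_\theta$ is independent of $\eta$ (it is the completion of the common core $\A_aD^{1/p_\theta}$ sent to $\A_aD^{1/p_0}$), and $i_{p_\theta}^\eta(\A_aD^{1/p_\theta})=\A_aD^{1/p_0}$ for every $\eta$, so it suffices to prove $(H^{p_1}(\A)^0,H^{p_0}(\A))_\theta=i_{p_\theta}^0(H^{p_\theta}(\A))$, where $i_{p_1}^0(x)=xD^{1/r}$ and $i_{p_\theta}^0(x)=xD^{1/q}$ with $1/r=1/p_0-1/p_1$, $1/q=1/p_0-1/p_\theta$. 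The inclusion $\subset$ is then free: Proposition \ref{pro:trueline} is stated for $H^p(\A)$ itself and yields $\|i_{p_\theta}^0(x)\|_{p_\theta}\le\|i_{p_\theta}^0(x)\|_\theta$, i.e. $[H^{p_1}(\A)^0,H^{p_0}(\A)]_\theta\subset i_{p_\theta}^0(H^{p_\theta}(\A))$. When $p_1=\infty$, where Proposition \ref{pro:trueline} does not apply directly, I would reduce to finite $p_1$ by \cite[Lemma 1]{W}, exactly as in the last paragraph of Theorem \ref{thm:interpolationhp-hat}.

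The content is the reverse inclusion, a bound $\|i_{p_\theta}^0(h)\|_\theta\le C\|h\|_{p_\theta}$ for $h\in H^{p_\theta}(\A)$. I would fix $n\in\nat$ with $np_0\ge1$, so $np_0,np_1,np_\theta$ all lie in the Banach range and $1/(np_\theta)=(1-\theta)/(np_0)+\theta/(np_1)$; for these exponents the identity $(H^{np_1}(\A)^0,H^{np_0}(\A))_\theta=i^0_{np_\theta}(H^{np_\theta}(\A))$ (equivalent norms, constant $C_0$) is the already-known case, obtained from Theorem \ref{thm:interpolationhp-hat} together with the $1$-complementation of Theorem \ref{thm:denseHp}(4), or directly from \cite[Lemma 3.2]{BR1}. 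Given $\varepsilon>0$, the factorization theorems of this section let me write $h=ah_1\cdots h_n$ with $a\in\A$ a contraction, $h_k\in H^{np_\theta}(\A)$, and $\prod_k\|h_k\|_{np_\theta}\le\|h\|_{p_\theta}+O(\varepsilon)$. For each $k$ I choose an interpolant $g_k\in A\F_a(H^{np_1}(\A)^0,H^{np_0}(\A))$ with $g_k(\theta)=i^0_{np_\theta}(\tilde h_k)$ and $\|g_k\|_{A\F_a}\le C_0\|h_k\|_{np_\theta}$, where $\tilde h_k=\sigma^\varphi_{(k-1)i/(nq)}(h_k)$ is a modular twist (chosen for the reason below), still in $H^{np_\theta}(\A)$ with the same norm since $\sigma^\varphi$ preserves $\A$ by \eqref{maximal} and is isometric on $L^{np_\theta}(\M)$.

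Setting $f(z)=a\,g_1(z)\cdots g_n(z)$, which again lies in $A\F_a(H^{p_1}(\A)^0,H^{p_0}(\A))$, the two boundary norms are controlled by the generalized Hölder inequality, $\|f(it)\|_{p_0}\le\prod_k\|g_k(it)\|_{np_0}$ and $\|f(1+it)\|_{H^{p_1}(\A)^0}\le\prod_k\|g_k(1+it)\|_{H^{np_1}(\A)^0}$, whence $\|f\|_{A\F_a}\le C_0^n\prod_k\|h_k\|_{np_\theta}\le C_0^n(\|h\|_{p_\theta}+O(\varepsilon))$. Letting $\varepsilon\to0$ would give $\|i_{p_\theta}^0(h)\|_\theta\le C_0^n\|h\|_{p_\theta}$, and density of $\A_aD^{1/p_\theta}$ (Lemma \ref{lem:analytic}) together with completeness upgrades this to all of $H^{p_\theta}(\A)$.

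The main obstacle, and the only genuinely delicate point, is to guarantee that $f(\theta)$ equals $i_{p_\theta}^0(h)$ on the nose rather than up to modular corrections. Indeed $f(\theta)=a\prod_k\tilde h_kD^{1/(nq)}$, and collecting the $n$ separated powers $D^{1/(nq)}$ to the right forces the relation $D^{w}xD^{-w}=\sigma^\varphi_{-iw}(x)$ to act on the intermediate factors; the twist $\tilde h_k=\sigma^\varphi_{(k-1)i/(nq)}(h_k)$ is engineered precisely so that this collection returns $ah_1\cdots h_nD^{1/q}=hD^{1/q}=i_{p_\theta}^0(h)$, while on the two boundaries, where only norm estimates are needed, the analogous (different) $\sigma^\varphi$-corrections are absorbed harmlessly by the isometry of $\sigma^\varphi$ and Hölder. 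Because these are complex-time automorphisms, defined only on analytic elements, I would first prove the estimate for $h$ in the dense analytic core, approximating the Riesz factors $h_k$ by analytic ones via Lemma \ref{lem:analytic}, and pass to general $h$ by density using the already-established $\|i_{p_\theta}^0(\cdot)\|_{p_\theta}\le\|i_{p_\theta}^0(\cdot)\|_\theta$. That $\sigma^\varphi$ preserves $\A$ and the Hardy spaces is also what keeps $f(z)$ inside $H^{p}(\A)$ (not merely $L^p(\M)$) on the whole strip and inside $A\F_a$; verifying that all these manipulations are compatible with analyticity in $z$ is the technical heart, the remainder reducing to the Banach-range interpolation and the factorization already in hand.
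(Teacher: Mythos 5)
Your overall skeleton (forward inclusion from Proposition \ref{pro:trueline}, reverse inclusion from the Riesz-type factorization plus the known Banach-range interpolation plus H\"older on a product of interpolants, Wolff for $p_1=\8$) matches the paper. But the step you yourself flag as ``the only genuinely delicate point'' contains a real error, not just a technicality. Your twist $\tilde h_k=\sigma^\varphi_{(k-1)i/(nq)}(h_k)$ is a modular automorphism at \emph{imaginary} time, i.e.\ conjugation by a positive power of $D$: $D^{s}xD^{-s}=\sigma^\varphi_{-is}(x)$. This is not isometric on $L^{np_\theta}(\M)$ --- it is not even bounded (already for $\M=M_2(\com)$ with a non-tracial state it rescales off-diagonal entries by an arbitrary factor). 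So the claim that $\tilde h_k$ has the same $H^{np_\theta}$-norm as $h_k$ is false, the bound $\|g_k\|_{A\F_a}\le C_0\|h_k\|_{np_\theta}$ no longer controls anything in terms of $\|h\|_{p_\theta}$, and the same unbounded conjugations reappear when you try to collect the interspersed factors $D^{1/(nr)}$ at the boundary $1+it$ to exhibit $f(1+it)$ as an element of $H^{p_1}(\A)^0=H^{p_1}(\A)D^{1/r}$. Restricting to analytic elements makes the twists \emph{defined} but does not make them norm-preserving, so the estimate $\|i^0_{p_\theta}(h)\|_\theta\le C_0^n\|h\|_{p_\theta}$ does not survive. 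Note also that with all $n$ factors embedded on the same side there is no choice of intermediate exponents $\eta_k$ that makes the $D$-powers cancel: writing $g_k(\theta)=D^{\alpha_k}h_kD^{\beta_k}$ with $\alpha_k,\beta_k\ge0$, the cancellation conditions $\beta_k+\alpha_{k+1}=0$ force $n=2$.

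That observation is exactly the paper's fix, and it is what you are missing. The paper factors $h=h_1h_2$ into only \emph{two} pieces in $H^{2p_\theta}(\A)$ and takes $g_1$ from the couple $(H^{2p_1}(\A)^1,H^{2p_0}(\A))$ (the $\eta=1$, left embedding) and $g_2$ from $(H^{2p_1}(\A)^0,H^{2p_0}(\A))$ (the $\eta=0$, right embedding), so that $g_1(\theta)g_2(\theta)=D^{1/q'}h_1\cdot h_2D^{1/q'}=i^{1/2}_{p_\theta}(h)$: the two $D$-powers land adjacent in the middle and no modular correction is ever needed, at $\theta$ or on the boundaries. This proves the theorem for the symmetric embedding $\eta=\tfrac12$, and \eqref{eq:analytic-inter} transfers it to all $\eta$. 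The price of using only two factors is that one pass only reaches $\tfrac12\le p_0<1$ (so that $2p_\theta\ge1$ and the Banach-range result \cite[Theorem 3.3]{BR1} applies to the halved exponents); the general case follows by iterating over the ranges $\tfrac1{2^{k+1}}\le p_0<\tfrac1{2^k}$, each step halving into exponents covered by the previous one. If you want to keep your $n$-factor scheme you must either justify a bounded substitute for the imaginary-time twists (none is available here) or restructure as the paper does; as written, the reverse inclusion is not proved.
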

\begin{proof} We only need to prove the case $0<p_0<1$. Assume that $p_1<\8$. By Proposition \ref{pro:trueline}, we get that
$$
\|i_{p_\theta}^{\eta}(x)\|_{p_\theta}\le C\|i_{p_\theta}^{\eta}(x)\|_\theta,\qquad \forall x\in H^{p_1}(\A)^\eta.
$$

Conversely, first assume that  $\frac{1}{2}\le p_0<1$.  Let $h\in H^{p_\theta}(\mathcal{A})$. From the proof Theorem \ref{thm:reisz}, it follows that for $\varepsilon>0$,  there exists $h_k\in H^{2p_\theta}(\mathcal{A})$,
$ k=1,2$  such that
\beq\label{eq:reise-2}
h=h_1h_2, \quad\|h_k\|_{2p_\theta}\le\|h\|_{p_\theta}^{\frac{1}{2}}+\varepsilon,\quad
k=1,2.
\eeq
 Applying \cite[Theorem 3.3]{BR1} and \eqref{eq:analytic-inter}, we conclude that there exist $g_1\in A\F( H^{2p_1}(\mathcal{A})^1, H^{2p_0}(\mathcal{A}))$  and $g_2\in A\F( H^{2p_1}(\mathcal{A})^0, H^{2p_0}(\mathcal{A}))$such that $g_1(\theta)=h_1$, $g_2(\theta)=h_2$,
\beq\label{eq:interpolation-2p}
\begin{array}{l}
\|g_1\|_{A\F_a( H^{n2p_1}(\mathcal{A})^1, H^{2p_0}(\mathcal{A}))}<\|h_1\|_{2p_\theta}+\varepsilon, \\
\|g_2\|_{A\F_a( H^{2p_1}(\mathcal{A})^0, H^{2p_0}(\mathcal{A}))}<\|h_2\|_{2p_\theta}+\varepsilon. 
\end{array}
\eeq
Set $g=g_1g_2$. Then $g\in A\F( H^{p_1}(\mathcal{A})^\frac{1}{2}, H^{p_0}(\mathcal{A}))$ and 
$ g(\theta) = h$.

 Hence, by  \eqref{eq:reise-2} and \eqref{eq:interpolation-2p}, we get that
$$
\begin{array}{rl}
    \|h\|_{\theta}&\le\big\|g\big\|_{A\F( H^{2p_1}(\mathcal{A})^\frac{1}{2}, H^{2p_0}(\mathcal{A}))}\\
    &=\max\{\sup_{t\in\mathbb{R}}\|g(it)\|_{p_0},\;\sup_{t\in\mathbb{R}}\|g(1+it)\|_{p_1}\}\\
     &\le \max\{\sup_{t\in\mathbb{R}}\Pi_{k=1}^2\|g_k(it)\|_{2p_0},\sup_{t\in\mathbb{R}}\Pi_{k=1}^2\|g_k(1+it)\|_{2p_1}\}\\
    &\le \Pi_{k=1}^2\max\{\sup_{t\in\mathbb{R}}\|g_k(it)\|_{2p_0},\sup_{t\in\mathbb{R}}\|g_k(1+it)\|_{2p_1}\}\\
    &= \|g_1\|_{A\F_a( H^{n2p_1}(\mathcal{A})^1, H^{2p_0}(\mathcal{A}))}\|g_2\|_{A\F_a( H^{2p_1}(\mathcal{A})^0, H^{2p_0}(\mathcal{A}))}\\
    &< (\|h_1\|_{2p_\theta}+\varepsilon)(\|h_2\|_{2p_\theta}+\varepsilon)\\
    &<(\|h\|_{p_\theta}^{\frac{1}{2}}+2\varepsilon)^2.
  \end{array}
$$
Letting $\varepsilon\rightarrow0$, we get
$$
\|h\|_{\theta}\le \|h\|_{p_\theta}.
$$
It follows that
$$
(H^{p_1}(\A)^\frac{1}{2},H^{p_0}(\A))_\theta=i_{p_\theta}^\frac{1}{2}( H^{p_\theta}(\A)).
$$
By \eqref{eq:analytic-inter}, this implies that
$$
(H^{p_1}(\A)^\eta,H^{p_0}(\A))_\theta=i_{p_\theta}^{\eta}( H^{p_\theta}(\A)).
$$

If $\frac{1}{4} \leq p < \frac{1}{2}$, arguing as above and using the previously proven case, we can conclude as before. An easy induction procedure allows us to obtain the desired result.

Finally, we use \cite[Theorem 3.4]{BR1} and \cite[Lemma 1]{W} along with the above method to prove this result for the case when $p_1 = \infty$.
\end{proof}

Recall that the dual space of $H^1(\A)$ is the space
 $BMO(\A)$  defined in \cite{J3}  (see \cite[Theorem 3.10]{J3}). Using \cite[Theorem 3.5]{BR1},  \cite[Lemma 1]{W} and the method in the proof of Theorem \ref{thm:interpolationhp1}, we obtain the following result.

\begin{theorem} \label{thm:interpolationhp2} Let $\mathcal{A}$ be a type 1 subdiagonal subalgebra. If $0<\theta<1$ and $0<p_0<p<\8$, then
$$
(BMO(\mathcal{A})^\eta, H^{p_0}(\A))_\theta=i_{p}^{\eta}( H^{p}(\A)),
$$
 where $\theta=\frac{p_0}{p}$.
\end{theorem}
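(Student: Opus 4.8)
The plan is to mirror the structure of the proof of Theorem \ref{thm:interpolationhp1}, treating $BMO(\A)$ as the substitute for the $H^{\8}$-endpoint ($p_1=\8$), and to reduce the quasi-Banach range $0<p_0<1$ to the Banach range $1\le p_0$, where the statement is already available as \cite[Theorem 3.5]{BR1}. Throughout I would work on the dense subspace $i_p^\eta(\A_a D^{1/p})=\A_a D^{1/p_0}$ of analytic vectors (Lemma \ref{lem:analytic}) together with the identification \eqref{eq:analytic-inter}, so that it suffices to compare the interpolation (quasi-)norm $\|\cdot\|_\theta$ with $\|\cdot\|_p$ on this subspace and then pass to completions. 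The case $1\le p_0$ is exactly \cite[Theorem 3.5]{BR1}, so the whole content is the extension to $0<p_0<1$.

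For the inclusion $i_p^\eta(H^p(\A))\subset (BMO(\A)^\eta,H^{p_0}(\A))_\theta$ with $\|\cdot\|_\theta\le C\|\cdot\|_p$, I would avoid factoring the $BMO$-endpoint altogether. Since $\A\subset BMO(\A)$ contractively, the identity map is a morphism of the couples $(\A^\eta,H^{p_0}(\A))\to(BMO(\A)^\eta,H^{p_0}(\A))$, whence $\|x\|_{(BMO^\eta,H^{p_0})_\theta}\le C\,\|x\|_{(\A^\eta,H^{p_0})_\theta}$. By Theorem \ref{thm:interpolationhp1} applied with $p_1=\8$ the right-hand couple satisfies $(\A^\eta,H^{p_0}(\A))_\theta=i_p^\eta(H^p(\A))$, which gives the asserted bound. (If an explicit interpolating function is wanted, the Riesz factorization of Theorem \ref{thm:reisz} with $n=1$ writes $h=a h'$ with $a\in\A$ a contraction and $h'$ outer, so that the \emph{bounded} factor $a$ carries the $BMO$-side while $h'$ is treated by the $\A$-endpoint family; every boundary value of the product then lies in $\A\cdot\A$ and $\A\cdot H^{p_0}$, so one never multiplies two genuine $BMO$ elements.)

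For the reverse inclusion I would invoke Wolff's reiteration lemma \cite[Lemma 1]{W}, which is valid for quasi-Banach couples. Fix an auxiliary exponent $p_1$ with $p_0<p_1<p$ and $p_1\ge1$. By \cite[Theorem 3.5]{BR1} (with $\alpha=p_1/p$) and by Theorem \ref{thm:interpolationhp1} (with the appropriate $\beta$) one has
$$
(BMO(\A)^\eta,H^{p_1}(\A))_\alpha=i_{p}^\eta(H^{p}(\A)),\qquad
(H^{p}(\A)^\eta,H^{p_0}(\A))_\beta=i_{p_1}^\eta(H^{p_1}(\A)).
$$
These two identities share the two middle spaces $i_p^\eta(H^p(\A))$ and $i_{p_1}^\eta(H^{p_1}(\A))$, so Wolff's lemma combines them into $(BMO(\A)^\eta,H^{p_0}(\A))_\theta=i_p^\eta(H^p(\A))$, the exponents matching via $1/p=\theta/p_0$, i.e. $\theta=p_0/p$. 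This settles every target exponent $p>1$, and the borderline $p=1$ is then recovered from \cite[Lemma 1]{W} together with the $H^1$--$BMO$ duality recalled just before the theorem.

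The main obstacle is the genuinely quasi-Banach target range $p_0<p\le1$. Here the Wolff configuration above is unavailable, since it would force an $H^{p_1}$--$BMO$ interpolation at an exponent $p_1<1$ (outside the scope of \cite[Theorem 3.5]{BR1}), and—crucially—$BMO(\A)$ is \emph{not} an algebra, so the doubling argument of Theorem \ref{thm:interpolationhp1} cannot be run at the $BMO$-line: one cannot split $BMO$ as a product of two copies of itself the way $H^{2p}\cdot H^{2p}\subset H^{p}$ or $\A\cdot\A=\A$ allow. I would therefore first establish the full statement for every $p>1$ as above, and then bootstrap downward by the halving scheme of Theorem \ref{thm:interpolationhp1}: for $\tfrac12\le p_0<1$ factor $h=h_1h_2$ with $h_k\in H^{2p}(\A)$ via Theorem \ref{thm:reisz}, lift each factor by the now-available large-exponent case to a family $g_k$ whose boundary values on $1+it$ lie in $H^{2p_0}(\A)$, and form $g=g_1g_2$; H\"older controls the $H^{p_0}$-line, while the $BMO$-line must be controlled by combining \cite[Theorem 3.5]{BR1} with Wolff's lemma rather than by any bare submultiplicativity of $BMO$. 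Iterating the passage $\tfrac12\le p_0<1\rightsquigarrow\tfrac14\le p_0<\tfrac12\rightsquigarrow\cdots$ exhausts all $0<p_0<1$, and the parameter bookkeeping $\theta=p_0/p$ yields the statement as written, with $H^p$ in place of $H^{p_\theta}$.
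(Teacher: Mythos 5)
Your overall recipe---\cite[Theorem 3.5]{BR1} for the Banach range, Wolff's lemma \cite[Lemma 1]{W}, and the machinery of Theorem \ref{thm:interpolationhp1}---is exactly the one the paper invokes (its ``proof'' is a single sentence naming these three ingredients), and your execution is sound for target exponents $p>1$: the inclusion $i_p^{\eta}(H^p(\A))\subset(BMO(\A)^{\eta},H^{p_0}(\A))_{\theta}$ via the bounded embedding $\A\hookrightarrow BMO(\A)$ together with Theorem \ref{thm:interpolationhp1} at $p_1=\infty$ is correct, and the Wolff configuration with an auxiliary $p_1\in[1,p)$ legitimately yields the full equality $(BMO(\A)^{\eta},H^{p_0}(\A))_{\theta}=i_p^{\eta}(H^p(\A))$ for every $0<p_0<p$ once $p>1$, since Wolff's lemma delivers both inclusions at once.

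The genuine gap is the range $p_0<p\le 1$, and your proposed repair does not close it. What is missing there is the inclusion $(BMO(\A)^{\eta},H^{p_0}(\A))_{\theta}\subset i_p^{\eta}(H^p(\A))$, i.e.\ the estimate $\|f(\theta)\|_p\le C\,\|f\|_{A\F(BMO(\A)^{\eta},H^{p_0}(\A))}$ for \emph{arbitrary} interpolating families $f$. Your ``halving scheme'' (factor $h=h_1h_2$ with $h_k\in H^{2p}(\A)$ and multiply two lifted families $g_1g_2$) only manufactures a particular family through a prescribed $h\in H^p(\A)$; it can only ever prove the opposite inclusion $i_p^{\eta}(H^p(\A))\subset(\cdot,\cdot)_{\theta}$, which your first paragraph already settled for all $p$ without any factorization. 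You yourself concede that the $BMO$-line of the product $g_1g_2$ is not controlled (``must be controlled by combining \cite[Theorem 3.5]{BR1} with Wolff's lemma rather than by any bare submultiplicativity of $BMO$''), which names the obstruction rather than resolving it. The clean fix is a \emph{second} application of Wolff: your first application already gives $(BMO(\A)^{\eta},H^{s}(\A))_{\cdot}=i_q^{\eta}(H^q(\A))$ for every target $q>1$ and every lower endpoint $0<s<q$, in particular for $s=p<1$; so take $A_1=BMO(\A)^{\eta}$, $A_2=i_q^{\eta}(H^q(\A))$ with any $q>1$, $A_3=i_p^{\eta}(H^p(\A))$, $A_4=H^{p_0}(\A)$, where the hypothesis $A_2=(A_1,A_3)_{\cdot}$ is the case just established and $A_3=(A_2,A_4)_{\cdot}$ is Theorem \ref{thm:interpolationhp1}. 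This disposes of $p\le 1$ with no product construction at the $BMO$ endpoint and makes the halving scheme unnecessary. (Separately, keep an eye on the exponent convention: the relation $\theta=p_0/p$ in the statement presupposes that $BMO(\A)^{\eta}$ sits at $\mathrm{Re}\,z=0$ and $H^{p_0}(\A)$ at $\mathrm{Re}\,z=1$, whereas Theorem \ref{thm:interpolationhp1} as printed uses $\frac1{p_\theta}=\frac{1-\theta}{p_0}+\frac{\theta}{p_1}$; when you quote that theorem at $p_1=\infty$ you must use the parameter $1-p_0/p$, not $p_0/p$, for the two identifications to match.)
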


\subsection*{Acknowledgement} We thank the reviewers for  useful comments, which improved the paper. This research was funded by the Science Committee of the Ministry of Science and High
Education of the Republic of Kazakhstan (Grant No. AP14871523).

\end{document}